\journalname{Annals of Finance}
\newcommand{\BR}{\mathbb{R}}
\newcommand{\SL}{\sum\limits}
\newcommand{\al}{\alpha}
\newcommand{\be}{\beta}
\newcommand{\ga}{\gamma}
\newcommand{\de}{\delta}
\newcommand{\De}{\Delta}
\newcommand{\CF}{\mathcal F}
\newcommand{\CB}{\mathcal B}
\newcommand{\MP}{\mathbf P}
\newcommand{\CM}{\mathcal M}
\newcommand{\Oa}{\Omega}
\newcommand{\si}{\sigma}
\renewcommand{\phi}{\varphi}
\newcommand{\vk}{\varkappa}
\newcommand{\eps}{\varepsilon}
\newcommand{\ol}{\overline}
\newcommand{\ta}{\theta}
\renewcommand{\d}{\mathrm{d}}
\begin{document}

\large

\title{\bf On a Class of Diverse Market Models}
\author{
Andrey Sarantsev%\footnote{INTECH, One Palmer Square, Princeton, NJ 08542.}
}  
\date{\today}
\institute{Andrey Sarantsev \at University of Washington \\ Department of Mathematics\\  Box 354350, Seattle, WA 98195-4350\\
\email{ansa1989@math.washington.edu}}
\authorrunning{Andrey Sarantsev}

\maketitle

%\date{\today}
\begin{abstract}
A market model in Stochastic Portfolio Theory is a finite system of strictly positive stochastic processes. Each process represents the capitalization of a certain stock. If at any time no stock dominates almost the entire market, which means that its share of total market capitalization is not very close to one, then the market is called diverse. There are several ways to outperform diverse markets and get an arbitrage opportunity, and this makes these markets interesting. A feature of real-world markets is that stocks with smaller capitalizations have larger drift coefficients. Some models, like the Volatility-Stabilized Model, try to capture this property, but they are not diverse. In an attempt to combine this feature with diversity, we construct a new class of market models. We find simple, easy-to-test sufficient conditions for them to be diverse and other sufficient conditions for them not to be diverse.
\keywords{Stochastic Portfolio Theory \and diverse markets \and arbitrage opportunity \and Feller's test}
\end{abstract}

\noindent {\bf JEL Classification Number} G10

\section{Introduction}

Stochastic Portfolio Theory is a recently developed area of financial mathematics.  It is a flexible framework for analyzing portfolio behavior and equity market structure. See the book \cite{F2002} and the more recent survey \cite{FK2009} for detailed treatment of this topic.

Fix $n$, the number of stocks. Denote by $X_i(t)$ the total capitalization of the $i$th stock at time $t$. Let 
$$
S(t) = X_1(t) + \ldots + X_n(t)\ \ \mbox{and}\ \ \mu_i(t) = \frac{X_i(t)}{S(t)},\ \ i = 1, \ldots, n,\ \ t \ge 0,
$$
be the total capitalization of the market at time $t$ and the {\it market weights} of stocks, respectively. Fix a threshold $\de \in (0, 1)$. The market is called {\it $\delta$-diverse} if for every $t \ge 0$ and $i = 1, \ldots, n$ we have: 
$$
\mu_i(t) < 1 - \de.
$$
This definition was introduced in \cite{F2002}. Intuitively  it means that, at any given moment, no stock dominates almost the entire market. 

Stochastic Portfolio Theory studies market models that capture characteristics observed in the real-world markets. In particular, this theory is concerned with existence or absence of arbitrage opportunities (=portfolios outperforming the market) in these models. A few portfolios  outperforming the market were constructed in the articles \cite{FKK2005}, \cite{FK2005} and the survey \cite[Sections 7, 8, 11]{FK2009}. In these articles, it is proved that diverse markets can be outperformed. This is what makes diverse market models interesting. These markets were further studied in the articles \cite{FFR2007}, \cite{K2008}.

A few market models have recently attracted considerable attention. First, let us mention Geometric Brownian Motions with Rank-Dependent Drifts, see \cite{BFK2005}, \cite{PP2008}, \cite{CP2010}, \cite{IchibaThesis}, \cite{PS2010}, \cite{IKS2013}, \cite{FIK2013}. There are several generalizations of this model: second-order models, \cite{FIK2013b}, the hybrid Atlas models, \cite{Ichiba11}, competing Levy particles, \cite{S2011}, and particles with asymmetric collisions, \cite{KPS2012}. However, these models are not diverse. 

In real-world markets, stocks with lower market weights have larger drift and larger volatility. The Volatility-Stabilized Model, introduced in \cite{FKK2005} and further elaborated in \cite{P2011}, \cite{S2013}, \cite{BF2008}, attempts to capture this effect; see also the article \cite{P2013} for some generalizations. Unfortunately, it is also not diverse; see \cite[Theorem 3]{P2011}.

In this paper, we desire to combine diversity with this property of stocks with lower market weights. We introduce the following general class of models. Assume $g : (0, 1 - \de) \to \BR$ is a continuous function such that $\lim_{s \uparrow 1 - \de}g(s) = \infty$. Let $W(\cdot)$, where
$$
W(t) = ((W_1(t), \ldots, W_n(t)), t \ge 0),\ \ t \ge 0,
$$
be an $n$-dimensional standard Brownian motion. The market model is defined by the following system of stochastic differential equations: 
$$
\d\log X_i(t) = -g(\mu_i(t))\d t + \d W_i(t),\ \ i = 1, \ldots, n.
$$
We find conditions on the function $g$ which guarantee that the market is $\de$-diverse. We also find other conditions which guarantee that it is not $\de$-diverse.

In addition, we refer the reader to the following articles on this topic. Some diverse models were introduced in the article \cite{FK2005} and were also mentioned in the survey \cite[Chapter 7]{FK2009}. The paper \cite{OR2006} gives a measure-change method for constructing diverse market models. 

First, we study the case $n = 2$ (two stocks). The proof in this case is easier than in the general case, and so we are able to obtain necessary and sufficient conditions for diversity. Then, we consider the general case $n \ge 2$, in which we are only able to find some sufficient conditions for $\de$-diversity and some other sufficient conditions for the absence of $\de$-diversity. Our main technique is Feller's test for explosions, see for example \cite[Section 6.2]{DurBook}. 

The paper is organized as follows. In Section 2, we present basic definitions and statements of theorems, as well as concrete examples of functions $g$ for which the proposed market model is $\de$-diverse or not $\de$-diverse. In Section 3, we provide proofs for the case $n = 2$, and in Section 4, we consider the general case $n \ge 2$. The Appendix contains the formulation of Feller's test for explosions, as well as the statements and proofs of some auxillary lemmas.

\section{Main Results}

First, let us introduce the general definition of a market model. 
Consider a filtered probability space $(\Oa, \CF, (\CF_t)_{t \ge 0}, \MP)$, where the filtration $(\CF_t)_{t \ge 0}$ satisfies the usual conditions: it is right-continuous and augmented by $\MP$-null sets.
Let $n$ be the number of stocks. We denote the $n\times n$-identity matrix by $I_n$. Let $\de_{ij}$ stand for the Kronecker delta symbol: $\de_{ij} = 1$ for $i = j$ and $\de_{ij} = 0$ for $i \ne j$. 

Fix an integer $d \ge 1$. Let $W(\cdot)$, where
$$
W(t) = (W_1(t), \ldots, W_d(t)),\ \ t \ge 0,
$$
be a $d$-dimensional standard Brownian motion. Assume the filtration $(\CF_t)_{t \ge 0}$ is generated by this Brownian motion. Assume $\ga(\cdot)$, where 
$$
\ga(t) = (\ga_1(t), \ldots, \ga_n(t)), \ \forall t \ge 0,
$$
is an $\BR^n$-valued progressively measurable stochastic process, and $\si(\cdot)$, where
$$
\si(t) = (\si_{ij}(t))_{1 \le i \le n,1 \le j \le d}, \ \forall t \ge 0
$$
is a matrix-valued $(\CF_t)_{t \ge 0}$-progressively measurable stochastic process. The size of this matrix is $n\times d$. We impose a technical condition: for every finite $T \ge 0$, each $\ga_i$ is integrable and each $\si_{ij}$ is square-integrable on $[0, T]$ a.s. 

\begin{definition}
Assume that an $n$-dimensional process $X(\cdot)$, where 
$$
X(t) = (X_1(t), \ldots, X_n(t)),\ \ t \ge 0,
$$
with the state space $(0, \infty)^n$, satisfies the following system of stochastic differential equations:
$$
\d\log X_i(t) = \ga_i(t)\d t + \SL_{j=1}^d\si_{ij}(t)\d W_j(t),\ \ i = 1, \ldots, n.
$$
Then this process $X(\cdot)$ is called the {\it market model of $n$ stocks} with {\it growth rates} $\ga_i(t)$, $i = 1, \ldots, n$, and {\it volatility matrix} $\si(t)$. The value $X_i(t)$ is called the {\it capitalization of the $i$th stock at time $t \ge 0$}. 
\end{definition}

We have already defined market weights and the notion of diversity in the introduction. Loosely speaking, a market is diverse if at every moment no stock dominates almost the entire market.  Let us recall this definition:

\begin{definition}
\label{defnweights}
Denote
\begin{equation}
\label{weights}
S(t) = X_1(t) + \ldots + X_n(t)\ \ \mbox{and} \ \ \mu_i(t): = \frac{X_i(t)}{S(t)},\ \ i = 1, \ldots, n.
\end{equation}
The quantity $S(t)$ is called {\it the total market capitalization} at time $t$. The quantity $\mu_i(t)$ is called {\it the market weight} of the $i$th stock at time $t$. The {\it market weights vector} is defined by
$$
\mu(t) \equiv (\mu_1(t), \ldots, \mu_n(t)),\ \ t \ge 0.
$$ 
\end{definition} 

\begin{definition} The market $X(\cdot)$, where $X(t) = (X_1(t), \ldots, X_n(t))$, $t \ge 0$, is {\it diverse with parameter $\de > 0$}, or simply {\it $\de$-diverse}, if a.s. for each $i = 1, \ldots, n$ and every $ t \ge 0$ we have: $\mu_i(t) < 1 - \de$. 
The parameter $\de$ is called a {\it diversity threshold}. 
\end{definition}

This can be viewed as a consequence of an antitrust legislation. See, for example, an interesting paper \cite{FS2011}. Incidentally, note the strict inequality $\mu_i(t) < 1 - \de$ instead of $\mu_1(t) \le 1 - \de$ in \cite{F2002} and \cite{FK2009}. The definition with $\le$ is common in the literature, but the definition with strict inequality is more convenient in our proofs.  

Let us introduce a new class of market models. We fix a parameter $\de \in (0, 1)$, which plays the role of the diversity threshold. 

\begin{definition} Assume a continuous function $g : (0, 1 - \de) \to \BR$ satisfies the following condition: $\lim_{x \uparrow 1 - \de}g(x) = \infty$.
Then the function $g(\cdot)$ is called {\it admissible}. 
\end{definition}

For every admissible $g(\cdot)$, consider the market model defined by: 
$$
n = d,\ \ga_i(t) = -g(\mu_i(t)),\ \si(t) = I_n,\ t \ge 0.
$$
In other words, consider the vector-valued stochastic process $X(\cdot)$, where 
$$
X(t) \equiv (X_1(t), \ldots, X_n(t)),\ t \ge 0,
$$
satisfying the following system of stochastic differential equations:
\begin{equation}
\label{main}
\d\log X_i(t) = -g(\mu_i(t))\d t + \d W_i(t),\ \ i = 1, \ldots, n,
\end{equation}
where $W(\cdot) = (W_1(t), \ldots, W_n(t), t \ge 0)$ is an $n$-dimensional Brownian motion. 

\begin{lemma}
\label{majorlemma}
Assume the market model is governed by~\eqref{main} for some admissible function $g$. Let us define an auxillary function:
\begin{equation}
\label{psi}
\psi(s) := s(-g(s) + 1/2) - s^2,\ \ s \in (0, 1 - \de).
\end{equation}
Then the market weights, defined in~\eqref{weights}, satisfy the following system of stochastic differential equations:
\begin{equation}
\label{weights}
\d\mu_i(t) = \biggl(\psi(\mu_i(t)) - \mu_i(t)\SL_{j=1}^n\psi(\mu_j(t))\biggr)\d t + \SL_{j=1}^n(\de_{ij}\mu_i(t) - \mu_i(t)\mu_j(t))\d W_j(t),
\end{equation}
 for $i = 1, \ldots, n,\ t \ge 0$. 
\end{lemma}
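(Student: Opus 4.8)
The plan is a direct application of It\^o's formula, passing from the logarithmic equation~\eqref{main} to the dynamics of the capitalizations themselves, then to the total capitalization $S$, and finally to the ratios $\mu_i = X_i/S$.

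First I would exponentiate. Writing $X_i = \exp(\log X_i)$ and applying It\^o to~\eqref{main}, the quadratic variation $\d\langle \log X_i\rangle = \d t$ produces the correction $\tfrac12 X_i\,\d t$, so that
\begin{equation*}
\d X_i(t) = X_i(t)\Bigl(-g(\mu_i(t)) + \tfrac12\Bigr)\d t + X_i(t)\,\d W_i(t).
\end{equation*}
Summing over $i$ gives the dynamics of $S = X_1 + \ldots + X_n$, whose martingale part is $\SL_{j=1}^n X_j\,\d W_j$. Because the driving components $W_1, \ldots, W_n$ are independent, the only covariation inputs the computation requires are $\d\langle X_i, S\rangle = X_i^2\,\d t$ and $\d\langle S\rangle = \SL_{j=1}^n X_j^2\,\d t$.

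Next I would apply It\^o to the smooth map $(x,s)\mapsto x/s$ on $(0,\infty)^2$, or equivalently expand $\d\mu_i = \d(X_i S^{-1})$ using $\d(S^{-1}) = -S^{-2}\,\d S + S^{-3}\,\d\langle S\rangle$ together with the cross term $-S^{-2}\,\d\langle X_i, S\rangle$. Substituting the expressions above and everywhere replacing $X_i/S$ by $\mu_i$ (so that $X_iX_j/S^2 = \mu_i\mu_j$, $X_iX_j^2/S^3 = \mu_i\mu_j^2$, and so on), the diffusion part collapses at once to $\SL_{j=1}^n(\de_{ij}\mu_i - \mu_i\mu_j)\,\d W_j$, matching~\eqref{weights}.

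The only real work is in the drift, and the purpose of introducing $\psi$ in~\eqref{psi} is precisely to absorb the It\^o corrections cleanly. Collecting the four drift contributions yields
\begin{equation*}
\mu_i\Bigl(-g(\mu_i)+\tfrac12\Bigr) - \mu_i\SL_{j=1}^n\mu_j\Bigl(-g(\mu_j)+\tfrac12\Bigr) + \mu_i\SL_{j=1}^n\mu_j^2 - \mu_i^2.
\end{equation*}
Using the identity $s(-g(s)+\tfrac12) = \psi(s) + s^2$ read off from~\eqref{psi}, the two $\mu_i^2$ terms cancel, the terms $\pm\mu_i\SL_{j}\mu_j^2$ cancel, and what survives is exactly $\psi(\mu_i) - \mu_i\SL_{j=1}^n\psi(\mu_j)$. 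I do not expect any genuine obstacle: the argument is mechanical, and the only thing to watch is careful bookkeeping of the It\^o correction and covariation terms, which the definition of $\psi$ is designed to streamline.
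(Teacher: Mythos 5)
Your proposal is correct and follows essentially the same route as the paper: It\^o's formula to pass from $\log X_i$ to $X_i$, summation to get $\d S$ and the covariations $\d\langle X_i, S\rangle = X_i^2\,\d t$, $\d\langle S\rangle = \sum_j X_j^2\,\d t$, then It\^o applied to the ratio $x/s$, with the drift terms recombined via $s(-g(s)+\tfrac12) = \psi(s) + s^2$. The cancellation you describe is exactly what happens in the paper's computation, so there is nothing to add.
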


A fairly straightforward proof (by It\^o's lemma) is postponed until the Appendix. Now we can state our main results. 
Define
\begin{equation}
\label{A1A2}
A_1(x) := \frac2{1 + (n-1)^{-1}}\frac1{x(1 - x)},\ \ A_2(x) := \frac1{x(1 - x)},\ \ a_1 = A_1(1 - \de),\ \ a_2 = A_2(1 - \de).
\end{equation}
First, consider the case $n = 2$ (two stocks). Here, the threshold $\de$, defined in Definition~\ref{defnweights}, must be between $0$ and $1/2$. Indeed, $\mu_1(t) + \mu_2(t) = 1,\ t \ge 0$, so at least one of market weights $\mu_1(t)$ and $\mu_2(t)$ must be greater than or equal to $1/2$. 

\begin{theorem} 
\label{thm1}
Consider the case of two stocks, $n = 2$. Fix the parameter $\de \in (0, 1/2)$. Assume the market model is governed by the equations~\eqref{main} for some admissible function $g(\cdot)$. 
The market is $\de$-diverse if and only if 
\begin{equation}
\label{mainthm}
\int_{1/2}^{1 - \de}\exp\left(\int_{1/2}^yA_2(z)g(z)dz\right)dy = \infty.
\end{equation}
\end{theorem}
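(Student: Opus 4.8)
The plan is to reduce the two-stock problem to a one-dimensional diffusion and apply Feller's test for explosions. Since $\mu_1(t) + \mu_2(t) = 1$, the market weight $Y(t) := \mu_1(t)$ determines the whole market, and the model lives on the set where both weights lie in the domain $(0, 1-\de)$ of $g$; for $n = 2$ this is exactly $Y(t) \in (\de, 1 - \de)$. Specializing Lemma~\ref{majorlemma} to $n = 2$ and using $\psi$ from~\eqref{psi}, I would first collapse~\eqref{weights} into the scalar stochastic differential equation
$$
\d Y(t) = Y(t)(1 - Y(t))\bigl[g(1 - Y(t)) - g(Y(t)) + 1 - 2Y(t)\bigr]\,\d t + \sqrt2\, Y(t)(1 - Y(t))\,\d B(t),
$$
where $B = (W_1 - W_2)/\sqrt2$ is a standard Brownian motion. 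The key reformulation is that the market is $\de$-diverse if and only if the diffusion $Y$, started in the interior, never reaches either endpoint of $(\de, 1 - \de)$ in finite time, i.e. both boundary points are inaccessible.

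Next I would compute the scale density. With drift $b(x) = x(1-x)[g(1-x) - g(x) + 1 - 2x]$ and diffusion coefficient $\si^2(x) = 2[x(1-x)]^2$, the ratio simplifies to
$$
\frac{2b(x)}{\si^2(x)} = A_2(x)\bigl[g(1-x) - g(x) + 1 - 2x\bigr],
$$
with $A_2$ as in~\eqref{A1A2}, so the scale density is $\rho(x) = \exp\bigl(-\int_{1/2}^x A_2(y)[g(1-y) - g(y) + 1 - 2y]\,\d y\bigr)$. The crucial observation is that near the right endpoint $1 - \de$ the only unbounded contribution comes from $g(y)$: the terms $g(1-y)$ and $1 - 2y$ stay bounded because $1 - y \to \de$ remains in the interior of $(0, 1-\de)$ and $A_2$ is bounded on $[1/2, 1-\de]$. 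Factoring these bounded pieces out shows $\rho(x) \asymp \exp\bigl(\int_{1/2}^x A_2(y) g(y)\,\d y\bigr)$ up to positive multiplicative constants, so $\int^{1-\de}\rho(x)\,\d x = \infty$ if and only if condition~\eqref{mainthm} holds.

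It then remains to translate divergence of the scale function into inaccessibility via Feller's test. I would verify that the right endpoint is inaccessible precisely when $\int^{1-\de}\rho = \infty$: when this integral diverges, the Feller quantity $v(x) = \int_{1/2}^x \rho(y)\bigl(\int_{1/2}^y \frac{2}{\si^2(z)\rho(z)}\,\d z\bigr)\,\d y$ diverges as $x \uparrow 1-\de$, whereas when it converges the speed term $\int^{1-\de}(\si^2\rho)^{-1}$ stays finite (since $\si^2$ is bounded below and $\rho$ does not vanish), forcing $v(1-\de-) < \infty$ and hence accessibility. Finally, the system~\eqref{main} is invariant under swapping the two stocks, which induces $Y \leftrightarrow 1 - Y$; equivalently the change of variables $x \mapsto 1 - x$, under which $A_2$ is symmetric, maps the left-endpoint inaccessibility integral exactly onto~\eqref{mainthm}. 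Hence both endpoints are inaccessible under the same condition, which yields the stated equivalence. The main obstacle I anticipate is the careful bookkeeping in Feller's test, in particular confirming that divergence of the scale function alone, rather than the full Feller integral, already captures inaccessibility here, together with making the ``bounded terms drop out'' comparison rigorous uniformly near the boundary.
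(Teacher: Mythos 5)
Your proposal is correct and takes essentially the same route as the paper's proof: reduce to the scalar diffusion for $\mu_1$ on $(\de, 1-\de)$ via $\mu_2 = 1 - \mu_1$ (your drift and diffusion coefficients agree exactly with the paper's $b_0$ and $\si$), factor the bounded terms $g(1-y)$ and $1-2y$ out of the scale density so that divergence of the scale is equivalent to~\eqref{mainthm}, apply Feller's test with the convergent-scale case settled by a finite speed-type integral, and dispose of the left endpoint $\de$ by the stock-swapping symmetry. The only point to tighten is your parenthetical ``$\rho$ does not vanish'': what is needed is that $\rho$ is bounded below by a positive constant on $[1/2, 1-\de)$, which does not follow from pointwise positivity alone but does follow from admissibility of $g$ (since $g(x)\to+\infty$ as $x\uparrow 1-\de$ makes the exponent of $\rho$ eventually increasing), which is precisely the paper's argument producing the constant $C_3$.
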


\begin{corollary} Consider the case of two stocks, $n = 2$. Fix the parameter $\de \in (0, 1/2)$. Assume the market model is given by~\eqref{main}.
\label{cor1}

(i) If $\int_{1/2}^{1 - \de}g(z)dz = \infty$ and for some $\eps > 0$ we have:
\begin{equation}
\label{simple}
\int_{1/2}^{1 - \de}\exp\left((a_2 - \eps)\int_{1/2}^yg(z)dz\right)dy = \infty,
\end{equation}
then the market is $\de$-diverse.

(ii) If $\int_{1/2}^{1 - \de}g(z)dz = \infty$, but
$$
\int_{1/2}^{1 - \de}\exp\left(a_2\int_{1/2}^yg(z)dz\right)dy < \infty,
$$
then the market is not $\de$-diverse.

(iii) If $\int_{1/2}^{1 - \de}g(z)dz < \infty$, then the market is not $\de$-diverse. 
\end{corollary}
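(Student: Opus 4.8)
The plan is to deduce Corollary~\ref{cor1} directly from Theorem~\ref{thm1} by analyzing the asymptotic behavior of the double integral in~\eqref{mainthm} as $y \uparrow 1 - \de$. The essential observation is that $A_2(z)$ is continuous and strictly positive on $(0,1)$, and on the compact sub-interval $[1/2, 1-\de]$ it is bounded and bounded away from zero, with $A_2(z) \to a_2$ as $z \uparrow 1 - \de$. Thus the inner exponent $\int_{1/2}^y A_2(z)g(z)\,\d z$ is, up to corrections controlled by the oscillation of $A_2$, comparable to $a_2\int_{1/2}^y g(z)\,\d z$. The convergence or divergence of the outer integral is governed entirely by how fast the inner exponent grows as $y \to 1 - \de$, since the integrand is positive and continuous on $[1/2, 1-\de)$; integrability can only fail near the right endpoint.

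First I would prove part (iii). If $\int_{1/2}^{1-\de} g(z)\,\d z < \infty$, then since $A_2$ is bounded on $[1/2, 1-\de]$, say by a constant $M$, the inner integral $\int_{1/2}^y A_2(z)g(z)\,\d z$ is also bounded (I would need $g$ or at least $A_2 g$ to be absolutely integrable; the hypothesis of a finite integral of $g$ combined with boundedness of $A_2$ gives this once one notes $g$ is eventually positive near $1-\de$, being continuous with limit $+\infty$). Hence the outer integrand is bounded, the integral over a finite interval is finite, and~\eqref{mainthm} fails, so by Theorem~\ref{thm1} the market is not $\de$-diverse.

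Next I would handle parts (i) and (ii), where $\int_{1/2}^{1-\de} g(z)\,\d z = \infty$. For part (i), fix $\eps > 0$ as given. Since $A_2(z) \to a_2$, there is $y_0 < 1 - \de$ with $A_2(z) \ge a_2 - \eps$ for all $z \in [y_0, 1-\de)$. Splitting $\int_{1/2}^y A_2(z)g(z)\,\d z$ at $y_0$ and using positivity of $g$ near the endpoint, the inner exponent dominates $(a_2-\eps)\int_{1/2}^y g(z)\,\d z$ up to an additive constant, whence by monotonicity of $\exp$ the outer integral in~\eqref{mainthm} dominates a constant multiple of the integral in~\eqref{simple}, which diverges by assumption; Theorem~\ref{thm1} then gives $\de$-diversity. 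For part (ii), the reverse comparison applies: since $A_2(z) \le a_2$ on $[1/2, 1-\de)$ (as $x(1-x)$ is increasing toward $(1-\de)\de$ on the relevant range, making $A_2$ increasing to $a_2$), the inner exponent is at most $a_2\int_{1/2}^y g(z)\,\d z$ up to a constant, so~\eqref{mainthm} is bounded by a multiple of the convergent integral in the hypothesis and hence converges, and Theorem~\ref{thm1} yields the failure of $\de$-diversity.

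The main obstacle I anticipate is justifying the monotonicity and sign claims cleanly near the endpoint: one must verify that $A_2$ is indeed monotone increasing on $[1/2, 1-\de]$ so that $A_2 \le a_2$ there (this requires $1 - \de \le 1$, i.e.\ $x(1-x)$ increasing only up to $x = 1/2$ and then \emph{decreasing}, so in fact $A_2(x) = 1/(x(1-x))$ is \emph{increasing} for $x > 1/2$, which is exactly the regime $[1/2, 1-\de]$ since $\de < 1/2$). I would need to confirm that this monotonicity holds on the whole interval of integration, and to control the additive constants arising from the integral over $[1/2, y_0]$, which are harmless because they contribute only a bounded multiplicative factor $e^{\const}$ that does not affect divergence or convergence of the outer integral. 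The rest is routine comparison of improper integrals.
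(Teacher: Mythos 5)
Your proposal is correct and follows essentially the same route as the paper: deduce all three parts from Theorem~\ref{thm1} by comparing $A_2(z)$ with $a_2$ (respectively $a_2-\eps$) near $1-\de$, using that $A_2$ is increasing up to $a_2$ on $[1/2,1-\de]$, that $g\ge 0$ near the endpoint, and that the contribution over a compact subinterval only produces a harmless multiplicative constant $e^{\const}$; part (iii) is likewise the paper's boundedness argument. The only blemish is the parenthetical claim in your part (ii) that $x(1-x)$ is \emph{increasing} toward $(1-\de)\de$ on $[1/2,1-\de]$ (it is decreasing there), but you correct this yourself in the final paragraph, so the argument stands as written.
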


Now, consider the general case: $n \ge 2$, the number of stocks is any fixed number greater than or equal to two.

\begin{theorem}
\label{thm2}
Fix $\de \in (0, 1/2)$. Assume the market model is governed by the equations~\eqref{main} for some admissible $g$ which satisfies
\begin{equation}
\label{boundedness}
-\infty < \varliminf\limits_{x \downarrow 0}xg(x) \le \varlimsup\limits_{x \downarrow 0}xg(x) < 0.
\end{equation}
Fix some $x_0 \in (0, 1 - \de)$. 

(i) If
$$
\int_{x_0}^{1 - \de}\exp\left(\int_{x_0}^yA_1(z)g(z)dz\right)dy < \infty,
$$
then the market is not $\de$-diverse. 

(ii) If 
$$
\int_{x_0}^{1 - \de}\exp\left(\int_{x_0}^yA_2(z)g(z)dz\right)dy = \infty,
$$
then the market is $\de$-diverse. 
\end{theorem}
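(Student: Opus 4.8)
The plan is to treat each market weight $\mu_i(\cdot)$ as a one–dimensional process driven by a single Brownian motion, and to apply Feller's test for explosions after trapping its state–dependent coefficients between two explicit autonomous diffusions. I would start from Lemma~\ref{majorlemma}: writing the martingale part of $\mu_i$ through its quadratic variation gives $\d\mu_i = b_i\,\d t + \si_i\,\d B_i$ for a one–dimensional Brownian motion $B_i$, with $b_i = \psi(\mu_i) - \mu_i\sum_{j}\psi(\mu_j)$ and
\[
\si_i^2 = \mu_i^2\Bigl((1-\mu_i)^2 + \textstyle\sum_{j\ne i}\mu_j^2\Bigr).
\]
Since $\mu_j>0$ and $\sum_{j\ne i}\mu_j = 1-\mu_i$, the bounds $\tfrac{(1-\mu_i)^2}{n-1}\le\sum_{j\ne i}\mu_j^2\le(1-\mu_i)^2$ yield
\[
\Bigl(1+\tfrac1{n-1}\Bigr)\mu_i^2(1-\mu_i)^2 \le \si_i^2 \le 2\,\mu_i^2(1-\mu_i)^2 .
\]
Using $\psi(s) = -sg(s)+s/2-s^2$ I would split off the singular part as $b_i = \tilde b(\mu_i) + R_i$ with $\tilde b(\mu):=-\mu(1-\mu)g(\mu)$. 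On a strip $[a_0,1-\de)$ with $a_0>\de$ every other weight obeys $\mu_j\le 1-\mu_i<1-\de$, so each $\mu_jg(\mu_j)$ is bounded — by hypothesis~\eqref{boundedness} as $\mu_j\downarrow0$ and by continuity of $g$ away from $0$ — whence $|R_i|\le C$ uniformly on the strip.

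For part (ii) I would introduce the autonomous operator $\mathcal L^{+}f = (\tilde b+C)f' + \mu^2(1-\mu)^2 f''$ and its scale function $p^{+}$. Because $\mu(1-\mu)$ stays bounded away from $0$ as $\mu\uparrow1-\de$, the scale density of $\mathcal L^{+}$ differs from the integrand $\exp\bigl(\int_{x_0}^{y}A_2(z)g(z)\,\d z\bigr)$ of condition (ii) only by a bounded factor, so that condition forces $p^{+}((1-\de)^-)=\infty$; moreover $p^{+}$ is increasing and, once $a_0$ is taken close enough to $1-\de$, convex on the strip. Itô's formula together with $b_i\le\tilde b+C$, $\si_i^2\le2\mu_i^2(1-\mu_i)^2$, and $p^{+\prime},p^{+\prime\prime}\ge0$ gives $b_ip^{+\prime}+\tfrac12\si_i^2p^{+\prime\prime}\le\mathcal L^{+}p^{+}=0$, so $p^{+}(\mu_i)$, stopped at the exit from the strip, is a supermartingale. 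Optional stopping then yields $\MP_x(\tau_{1-\de}<\tau_{a_0})\le \bigl(p^{+}(x)-p^{+}(a_0)\bigr)/\bigl(p^{+}((1-\de)^-)-p^{+}(a_0)\bigr)=0$ for $x\in(a_0,1-\de)$; a strong–Markov argument over successive returns to $a_0$ upgrades this to $\MP(\mu_i\text{ ever reaches }1-\de)=0$, and a union over $i$ proves $\de$-diversity.

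Part (i) is the mirror image. I would use $\mathcal L^{-}f=(\tilde b-C)f'+\tfrac12\bigl(1+\tfrac1{n-1}\bigr)\mu^2(1-\mu)^2 f''$, whose scale density differs from $\exp\bigl(\int_{x_0}^{y}A_1(z)g(z)\,\d z\bigr)$ by a bounded factor, so condition (i) gives $p^{-}((1-\de)^-)<\infty$. The opposite bounds $b_1\ge\tilde b-C$ and $\si_1^2\ge(1+\tfrac1{n-1})\mu_1^2(1-\mu_1)^2$, with $p^{-\prime},p^{-\prime\prime}\ge0$, make $p^{-}(\mu_1)$ a submartingale on the strip. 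Optional stopping now gives $\MP_x(\tau_{1-\de}<\tau_{a_0})\ge\bigl(p^{-}(x)-p^{-}(a_0)\bigr)/\bigl(p^{-}((1-\de)^-)-p^{-}(a_0)\bigr)>0$, and since the interior diffusion is non-degenerate it reaches the strip with positive probability; hence $\mu_1$ attains $1-\de$ with positive probability and the market is not $\de$-diverse.

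The step I expect to be the real obstacle is the non-autonomy: $b_i$ and $\si_i$ depend on the whole vector $\mu(\cdot)$, so neither Feller's test nor a pathwise comparison theorem applies to $\mu_i$ directly. Sandwiching the generator between $\mathcal L^{-}$ and $\mathcal L^{+}$ works only because, as $\mu_i\uparrow1-\de<1$, the diffusion coefficient $\si_i^2$ stays bounded away from $0$ and $\infty$; this is what makes the uniformly bounded remainder $R_i$ perturb the scale density by a harmless bounded factor and lets hypothesis~\eqref{boundedness} do its job. The cost of replacing $\si_i^2$ by the two–sided bound $\bigl[(1+\tfrac1{n-1})\mu^2(1-\mu)^2,\,2\mu^2(1-\mu)^2\bigr]$ is precisely the gap between $A_1$ and $A_2$, which is why for $n\ge3$ one obtains only sufficient conditions; when $n=2$ the two bounds coincide and the sharp criterion of Theorem~\ref{thm1} is recovered.
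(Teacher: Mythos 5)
Your reduction of the coefficients---the two-sided bound on $\si_i^2$ and the splitting $b_i=\tilde b(\mu_i)+R_i$ with $|R_i|\le C$ on a strip near $1-\de$---is essentially the paper's Step 2, but your replacement of the paper's machinery (time-change via Lemma~\ref{Hajek}, pathwise comparison via Lemma~\ref{comparisonIW}, then Feller's test applied to the autonomous comparison diffusions $Z_1,Z_2$) by direct super/sub-martingale arguments with the scale functions $p^{\pm}$ of the bounding operators is a genuinely different route, and the convexity of $p^{\pm}$ near $1-\de$ does make the diffusion bounds enter with the correct sign, reproducing the $A_1$/$A_2$ gap. However, the proposal has two genuine gaps.

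First, you never treat the boundary $0$. Diversity as used in the paper requires every weight to remain in $(0,1-\de)$ for all time: if some $\mu_j$ hits $0$, the capitalization $X_j$ vanishes and the system~\eqref{main} ceases to be defined (its drift $-g(\mu_j)$ blows up there, since $xg(x)$ is bounded between negative constants near $0$), so neither the conclusion of (ii) ("for every $t\ge 0$, all $\mu_i(t)<1-\de$") nor that of (i) ("with positive probability some weight attains $1-\de$", which requires the model to survive until that time) can be drawn. The paper runs a separate Feller-test argument at $0$, and this is precisely where the strict inequality $\varlimsup_{x\downarrow 0}xg(x)<0$ in~\eqref{boundedness} is used: it yields $\varliminf_{x\downarrow 0}b_1(x)>0$, an inward drift that keeps weights away from $0$. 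Tellingly, your argument uses only the boundedness of $xg(x)$ near $0$, never its strict negativity---a sign that this half of the proof is absent.

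Second, in part (i) the inequality $\MP_x(\tau_{1-\de}<\tau_{a_0})\ge\bigl(p^{-}(x)-p^{-}(a_0)\bigr)/\bigl(p^{-}((1-\de)^-)-p^{-}(a_0)\bigr)$ does not follow from the submartingale property plus optional stopping alone: what that argument bounds from below is the probability of the event $\{\tau_{1-\de}<\tau_{a_0}\}\cup\{\mu_1 \text{ never exits the strip}\}$. Finiteness of the scale at a boundary does not by itself imply the boundary is reached in finite time---this is exactly why Feller's test carries the second, speed-measure condition $I_{\be}<\infty$, which the paper explicitly verifies for its comparison diffusion $Z_1$. In your framework you must separately rule out that $\mu_1$ stays in $(a_0,1-\de)$ forever, say by a Lyapunov bound on the expected exit time (using $\si_1^2\ge c>0$ and $b_1\le C$ on the strip, with a test function of the form $-e^{-\la x}$ to absorb the one-sidedly unbounded drift). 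Relatedly, "a strong-Markov argument over successive returns to $a_0$" and "the interior diffusion is non-degenerate, so it reaches the strip with positive probability" both need care: $\mu_1$ by itself is not a Markov process (its coefficients depend on the whole vector $\mu$), so these steps must be recast as conditional versions of your martingale estimates at stopping times; the paper avoids this entirely because its comparison with $Z_1$ is global on $(0,1-\de)$, and Feller's test for the autonomous $Z_1$ settles both the journey to the strip and the attainability of $1-\de$ in one stroke.
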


\begin{corollary} Fix the parameter $\de \in (0, 1/2)$. Take any point $x_0 \in (0, 1 - \de)$. Assume the market model is governed by the equations~\eqref{main} for some admissible function $g$    which satisfies the condition~\eqref{boundedness}. 

(i) If $\int_{x_0}^{1 - \de}g(z)dz = \infty$ and for some $\eps > 0$ we have: 
$$
\int_{x_0}^{1 - \de}\exp\left((a_2 - \eps)\int_{x_0}^yg(z)dz\right)dy = \infty,
$$
then the market is $\de$-diverse. 

(ii) If $\int_{x_0}^{1 - \de}g(z)dz = \infty$ and 
$$
\int_{x_0}^{1 - \de}\exp\left(a_1\int_{x_0}^yg(z)dz\right)dy  < \infty,
$$
then the market is not $\de$-diverse.

(iii) If $\int_{x_0}^{1 - \de}g(z)dz < \infty$, then the market is not $\de$-diverse.

\label{cor2}
\end{corollary}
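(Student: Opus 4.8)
The plan is to deduce all three parts directly from Theorem~\ref{thm2} by replacing the variable rates $A_1(z)$ and $A_2(z)$ inside the exponent by their endpoint values $a_1 = A_1(1-\de)$ and $a_2 = A_2(1-\de)$ from~\eqref{A1A2}. The mechanism behind every part is the same: since $g$ is admissible, $g(z)\to\infty$ as $z\uparrow 1-\de$, so $g$ is strictly positive on some interval $[c,1-\de)$, and there $A_i(z)g(z)$ can be compared with $a_ig(z)$ using only the smallness of $A_i(z)-a_i$, which tends to $0$ by continuity. On the complementary compact interval $[x_0,c]$ every function in sight is continuous, hence bounded, so the corresponding piece of each integral is a harmless constant.

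For part (iii) I would invoke Theorem~\ref{thm2}(i). On $[x_0,1-\de]$ the factor $A_1(z)=\frac{2}{1+(n-1)^{-1}}\frac{1}{z(1-z)}$ is bounded by a constant $C$, because $z(1-z)$ stays away from $0$. The hypothesis $\int_{x_0}^{1-\de}g(z)\,dz<\infty$ together with $g>0$ near $1-\de$ gives $\int_{x_0}^{1-\de}|g(z)|\,dz<\infty$, so $\int_{x_0}^y A_1(z)g(z)\,dz$ is bounded uniformly in $y$; the outer integral of a bounded function over the finite interval $[x_0,1-\de]$ is therefore finite, and Theorem~\ref{thm2}(i) yields the absence of diversity.

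For part (i) I would use Theorem~\ref{thm2}(ii). Writing $A_2(z)g(z)=(a_2-\eps)g(z)+(A_2(z)-a_2+\eps)g(z)$ and choosing $c$ so close to $1-\de$ that $A_2(z)>a_2-\eps$ and $g(z)>0$ on $[c,1-\de)$, the error integrand is nonnegative there; hence for $y\in[c,1-\de)$ one gets $\int_{x_0}^y A_2(z)g(z)\,dz\ge (a_2-\eps)\int_{x_0}^y g(z)\,dz - C$ with a constant $C$ absorbing the contribution of $[x_0,c]$. Exponentiating and integrating, $\int_{x_0}^{1-\de}\exp(\int_{x_0}^yA_2(z)g(z)\,dz)\,dy$ dominates a constant multiple of $\int_{c}^{1-\de}\exp((a_2-\eps)\int_{x_0}^y g(z)\,dz)\,dy$, which is infinite by hypothesis (the missing piece over $[x_0,c]$ is finite). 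Part (ii) is the mirror image via Theorem~\ref{thm2}(i): here I split off $a_1g(z)$ and must bound the error from above, which is where the sign of $A_1(z)-a_1$ matters. Since $\de<1/2$ we have $1-\de>1/2$, and $1/(z(1-z))$ is increasing on $(1/2,1)$, so $A_1(z)<a_1$ for $z$ in a left-neighborhood of $1-\de$; there $(A_1(z)-a_1)g(z)\le 0$, giving $\int_{x_0}^y A_1(z)g(z)\,dz\le a_1\int_{x_0}^y g(z)\,dz + C$ with the sharp constant $a_1$ and no $\eps$-loss, and the conclusion follows as before.

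The main obstacle is this sign bookkeeping for the error term $\int(A_i-a_i)g$: the comparison must run in the correct direction on the interval where $g>0$, and for part (ii) it must produce the exact endpoint constant $a_1$ rather than $a_1+\eps$. That sharpness is precisely what the monotonicity of $1/(z(1-z))$ past $z=1/2$ supplies, and it is the reason the hypothesis $\de\in(0,1/2)$ enters; the remaining estimates over the compact interval $[x_0,c]$ are routine.
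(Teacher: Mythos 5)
Your proposal is correct and follows essentially the same route as the paper, which leaves Corollary~\ref{cor2} to the reader as an adaptation of the proof of Corollary~\ref{cor1}: compare $A_i(z)g(z)$ with $a_ig(z)$ on a left-neighborhood of $1-\de$ where $g>0$ (using continuity of $A_i$ at $1-\de$ for the $\eps$-loss in part (i), and monotonicity of $1/(z(1-z))$ on $(1/2,1)$ for the sharp bound $A_1(z)\le a_1$ in part (ii)), absorb the compact piece $[x_0,c]$ into a constant, and invoke Theorem~\ref{thm2}. Your sign bookkeeping, including the observation that part (ii) needs $A_1(z)\le a_1$ exactly rather than up to $\eps$, matches what the paper's argument for Corollary~\ref{cor1} does with $A_2$ on $[1/2,1-\de]$.
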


The proofs of Theorems 1 and 2 are given in Sections 3 and 4.  In both cases, to prove that the market is $\de$-diverse, we must show that no market weight hits zero or $1 - \de$. Indeed, if a certain market weight hits zero, then the respective capitalization $X_i(t)$ becomes zero, and this is forbidden by the definition of the market model. If a certain market weight hits $1 - \de$, then the market model is not $\de$-diverse. Similarly, to prove that the market is not $\de$-diverse, we must show that no market weight hits zero, but at least one market weight hits the point $1 - \de$. 

In the case $n = 2$, we can express $\mu_2(t) = 1 - \mu_1(t)$ and write a stochastic differential equation for $\mu_1(t)$. Then we apply Feller's explosion test from \cite[Section 6.2]{DurBook} for $\mu_1$. For convenience, we provide the general statement of this test in the Appendix. For the general case, we take each market weight and estimate its drift and diffusion, to find whether it hits or does not hit $0$ and $1 - \de$. In this case, the proof is also based on Feller's test, but we were not able to find  conditions which are both necessary and sufficient. We only found a condition under which the market is $\de$-diverse, and another condition under which it is not $\de$-diverse. These two conditions are pretty close to each other, but there is still a gap between them.

Note that, in the proposed model, the drift is larger for smaller stocks, but the volatility coefficient is the same (unit) for all stocks. However, in volatility-stabilized models, as well as in real-world markets, the volatility coefficient has this same property as the drift coefficient: it is larger for stocks with smaller capitalization. To capture this, it would be nice to introduce the model
$$
\d\log X_i(t) = g(\mu_i(t))\d t + h(\mu_i(t))\ \d W_i(t),\ \ t \ge 0,\ \ i = 1, \ldots, n.
$$
Here, $g, h : (0, 1 - \de) \to \BR$ are continuous functions. It is worth to find conditions on the functions $g$ and $h$ which guarantee existence or absence of $\de$-diversity. Alternatively, consider even more general model
$$
\d\log X_i(t) = g_i(\mu_i(t))\d t + h_i(\mu_i(t))\ \d W_i(t),\ \ t \ge 0,\ \ i = 1, \ldots, n,
$$
where the functions $g_i, h_i : (0, 1 - \de) \to \BR,\ i = 1, \ldots, n$, are continuous. This is an interesting topic for future research.

Let us return to the proposed class of models and consider some of examples. 

\begin{example} Assume there are two stocks, $n = 2$. Check the conditions of Corollary~\ref{cor1} for the function
$$
g(y) = \frac{p}{1 - \de - y},\ \ \mbox{where}\ \ p > 0.
$$
Then $\int_{1/2}^{1 - \de}g(y)dy = \infty$. For $\al > 0$ we have:
$$
\exp\left(\al\int_{1/2}^xg(y)dy\right) = \exp\left(-\al p\log(1 - \de - x) + \al p\log(1/2 - \de)\right) = c(1 - \de - x)^{-\al p},
$$
where $c > 0$ is a certain constant (dependent on $\al$ and $p$ but not on $x$). Therefore, 
$$ 
\int_{1/2}^{1 - \de}\exp\left(\al\int_{1/2}^xg(y)dy\right)dx < \infty\ \ \mbox{if and only if} \ \ \al p < 1.
$$
If $p < \de(1 - \de)$, then take $\al = a_2 = (\de(1 - \de))^{-1}$  and get: $\al p < 1$, so the market is not $\de$-diverse. If $p > \de(1 - \de)$, then take $\al = a_2 - \eps = (\de(1 - \de))^{-1} - \eps$ for sufficiently small $\eps > 0$ and get: $\al p \ge 1$, so the market is $\de$-diverse. For $p = \de(1 - \de)$, Corollary~\ref{cor1} does not work. We must use the general Theorem~\ref{thm1}. We can verify directly that the condition~\eqref{mainthm} is true for this function $g$. Thus, the market is $\de$-diverse. 
\end{example}

\begin{example} Consider again the case $n = 2$, two stocks. Check the conditions of Corollary~\ref{cor1} for the function
$$
g(y) = \frac{p}{(1 - \de - y)^q},\ \ \mbox{where}\ \ p, q > 0.
$$
We have just discussed the case $q = 1$. 
For $q < 1$, we have: $\int_{1/2}^{1 - \de}g(y)dy < \infty$, so the market is not $\de$-diverse. For $q > 1$, we have: 
$$
\int_{1/2}^yg(z)dz = \frac{p}{q(1 - \de - y)^{q-1}} - c,
$$
 where $c$ is a certain constant, dependent on $p, q$, but not $x$. Therefore, $\int_{1/2}^{1 - \de}g(z)dz = \infty$. In addition, it is easy to see that for every constant $\al > 0$, we have:
\begin{equation}
\int_{1/2}^{1 - \de}\exp\left(\al\int_{1/2}^yg(z)dz\right)dy = \infty.
\end{equation}
Therefore, the market is $\de$-diverse. 
\end{example}

\begin{example} Now, consider the general case, $n \ge 2$. Take an admissible function $g$ which satisfies the condition~\eqref{boundedness}. Assume that in some left neighborhood $(x_0, 1 - \de)$ of $1 - \de$, where $x_0 \in (0, 1- \de)$ is a fixed number, it is given by the formula
$$
g(z) = \frac{p}{1 - \de - z}.
$$
Here, similarly to Example 1, we have: from Corollary 2, if $p < 1/a_1$, then the market is not $\de$-diverse, and if $p > 1/a_2$, then the market is $\de$-diverse. If $p = 1/a_2$, then we must use Theorem 2 instead of Corollary 2, which gives us that the market is $\de$-diverse. If $1/a_1 \le p < 1/a_2$, then the question remains open.
\end{example}

\begin{example} Make the same assumptions as in the previous example, except that, in some left nieghborhood of $1 - \de$, we have:
$$
g(z) = \frac{p}{(1 - \de - z)^q},\ \ \mbox{where}\ \ p, q > 0.
$$
Then, similarly to Example 2, we have: for $q < 1$, this market is not $\de$-diverse, and for $q > 1$, it is always diverse. The case $q = 1$ was already considered. 
\end{example}

\section{Proof of Theorem~\ref{thm1} and Corollary~\ref{cor1}}

{\it Proof of Theorem~\ref{thm1}.} Since there are only two stocks, we have: $\mu_2(t) = 1 - \mu_1(t)$. To prove $\de$-diversity, it suffices to show that $\mu_1$ does not hit $1 - \de$. Indeed, these two stocks are absolutely symmetric, and the proof that $\mu_2$ does not hit $1 - \de$ is repeated verbatim. After we prove that both of these weights are strictly less than $1 - \de$, we can immediately conclude that both are strictly greater than $\de$, since they add up to $1$. Therefore, they do not hit zero. Thus, we arrive at the conclusion that this is a $\de$-diverse market model.

Plug $\mu_2(t) = 1 - \mu_1(t)$ into the equation~\eqref{weights}. The market weight $\mu_1$ satisfies the equation:
\begin{align*}
\d\mu_1(t) = & \Bigl[\psi(\mu_1(t))(1 - \mu_1(t)) - \mu_1\psi(1 - \mu_1(t))\Bigr]\d t  \\ & + (\mu_1(t) - \mu_1^2(t))\d W_1(t) - (\mu_1(t) - \mu_1^2(t))\d W_2(t),\ \ t \ge 0.
\end{align*}
Define the process $B(\cdot) =  (B(t), t \ge 0)$ by 
$$
B(t) := \frac1{\sqrt{2}}(W_1(t) - W_2(t)).
$$
This is a one-dimensional standard Brownian motion. Therefore, $\mu_1(\cdot)$ satisfies the following stochastic differential equation:
$$
\d\mu_1(t) = b_0(\mu_1(t))\d t + \si(\mu_1(t))\d B(t),
$$
where the drift and diffusion coefficients $b_0$ and $\si$ are defined as:
$$
b_0(x) := (1-x)\psi(x) - x\psi(1 - x),\ \  \mbox{and}\ \ \si(x) := \sqrt{2}(x - x^2).
$$
Use Feller's test for explosions, see the Appendix. We apply this test for the interval $(\de, 1 - \de)$ of values of $\mu_1$. Let us follow the notation from Appendix. Take $x_0 = 1/2$. The natural scale is defined as
$$
\phi(x) = \int_{1/2}^x\exp\left(\int_{1/2}^y-\frac{2b_0(z)}{\si^2(z)}dz\right)dy.
$$
Let $m(x) := 1/(\phi'(x)\si^2(x))$. The following conditions are equivalent: $\mu_1$ does not hit $1 - \de$ a.s. if and only if either $\phi(1 - \de) = \infty$, or $\phi(1 - \de) < \infty$ but 
$$
\int_{1/2}^{1 - \de}m(x)(\phi(1 - \de) - \phi(x))dx = \infty.
$$
Now, let us plug $b_0$ and $\si$ into these formulas:
\begin{align*}
\frac{2b_0(z)}{\si^2(z)} =& \frac{2((1-z)\psi(z) - z\psi(1 - z))}{2z^2(1 - z)^2} = \frac{\psi(z)}{z^2(1 - z)} - \frac{\psi(1 - z)}{z(1 - z)^2}  
= \frac{z(-g(z) + 1/2) - z^2}{z^2(1 - z)} \\ &- \frac{(1 - z)(-g(1 - z) + 1/2) - (1 - z)^2}{z(1 - z)^2}  
= -\frac{g(z)}{z(1 - z)} + \frac{g(1 - z)}{z(1 - z)} + \frac{1 - 2z}{z(1 - z)} \\ &= -\frac{g(z)}{z(1 - z)} + \eps_1(z),
\end{align*}
where we let
$$
\eps_1(z) := \frac{g(1 - z) + 1 - 2z}{z(1 - z)}.
$$
Since the function $g(\cdot)$ is continuous on $[\de, 1/2]$, there exists a constant $C_1 > 0$ such that for every $z \in [1/2, 1 - \de]$ we have: $|\eps_1(z)| \le C_1$. Let us continue our calculations:  
\begin{multline*}
G(y) := \exp\left(\int_{1/2}^y-\frac{2b_0(z)}{\si^2(z)}dz\right)  = \exp\left(\int_{1/2}^y\left(\frac{g(z)}{z(1 - z)} - \eps_1(z)\right)dz\right) = e^{F(y)}\eps_2(y),
\end{multline*}
where for $y \in [1/2, 1 - \de)$ we have:
$$
F(y) := \int_{1/2}^y\frac{g(z)}{z(1-z)}dz = \int_{1/2}^yA_2(z)g(z)dz,\ \ \eps_2(y) := \exp\left(-\int_{1/2}^{y}\eps_1(z)dz\right).
$$
Let $C_2 := \exp((1/2 - \de)C_1)$. Then for every $y \in [1/2, 1 - \de)$ we have:
$$
0 < C_2^{-1} \le \eps_2(y)  = \frac{G(y)}{e^{F(y)}} \le C_2 < \infty.
$$
Therefore, the integrals
$$
\phi(1 - \de) = \int_{1/2}^{1 - \de}G(y)dy \ \ \mbox{and}\ \ \int_{1/2}^{1 - \de}e^{F(y)}dy
$$
either both converge or both diverge. If they diverge, then the process $\mu_1(\cdot)$ does not hit $1 - \de$, and the market is $\de$-diverse. If they both converge, then let us show that
$$
\int_{1/2}^{1 - \de}(\phi(1 - \de) - \phi(x))m(x)dx < \infty.
$$
This will prove that $\mu_1(\cdot)$ does hit $1 - \de$ with positive probability, so the market is not $\de$-diverse. Indeed, 
$$
\phi(1 - \de) - \phi(x) = \int_x^{1 - \de}G(y)dy, 
$$
and, in addition,
$$
 m(x) := \frac1{\phi'(x)\si^2(x)} = \frac1{G(x)}\cdot\frac1{2x^2(1 - x)^2}.
$$
Therefore, we have:
\begin{equation}
\label{firstint}
\int_{1/2}^{1 - \de}(\phi(1 - \de) - \phi(x))m(x)dx = \int_{1/2}^{1 - \de}\int_x^{1 - \de}G(y)dy\cdot
\frac{1}{G(x)}\cdot\frac{dx}{2x^2(1 - x)^2}.
\end{equation}
Note that  the function $1/(2x^2(1 - x)^2)$ is bounded from above for $x \in [1/2, 1- \de)$. To show that the integral from~\eqref{firstint} is finite, we need to prove that the integral
\begin{equation}
\label{secondint}
\int_{1/2}^{1 - \de}\left(\int_x^{1 - \de}e^{F(y)}dy\right)\frac{dx}{e^{F(x)}}
\end{equation}
is finite. Recall that, according to Definition 4, $g(x) \to \infty$ as $x \uparrow 1 - \de$. There exists $x_0 \in (1/2, 1 - \de)$ such that $g(x) \ge 0$ for $x \in [x_0, 1 - \de)$. The function $F(\cdot)$ increases on $[x_0, 1 - \de)$, and so does $e^{F(\cdot)}$. Therefore, $e^{F(x)} \ge e^{F(x_0)}$ for $x \in [x_0, 1 - \de)$. Since $F(\cdot)$ is continuous on $[1/2, x_0]$, it is bounded from below on this interval, and so is $e^{F(\cdot)}$. We conclude that the function $e^{F(\cdot)}$ is bounded from below on $[1/2, 1 - \de)$ by some positive constant $C_3 > 0$. Therefore, we have:
\begin{align*}
\int_{1/2}^{1 - \de}\left(\int_x^{1 - \de}e^{F(y)}dy\right)\frac{dx}{e^{F(x)}}  &\le \frac1{C_3}\int_{1/2}^{1 - \de}\left(\int_{1/2}^{1 - \de}e^{F(y)}dy\right)dx \\ &= \frac{(1 - \de) - 1/2}{C_3}\int_{1/2}^{1 - \de}e^{F(y)}dy < \infty.\ \ \blacksquare
\end{align*}

{\it Proof of Corollary~\ref{cor1}.} (i) Let us show that
\begin{equation}
\label{infty}
\int_{1/2}^{1 - \de}\exp\left(\int_{1/2}^yA_2(z)g(z)dz\right)dy = \infty.
\end{equation}
Since the function $A_2(\cdot)$ is continuous and increasing on $[1/2, 1 - \de]$, there exists $x_0 \in (1/2, 1 - \de)$ such that for $z \in [x_0, 1 - \de]$ we have: $a_2 - \eps \le A_2(z) \le a_2$. Since $g(1 - \de) = \infty$, without loss of generality (increasing the value of $x_0$ if necessary) we can assume $g(z) \ge 0$ for all $z \in [x_0, 1 - \de)$. To prove~\eqref{infty}, let us show that
\begin{equation}
\label{second}
\int_{x_0}^{1 - \de}\exp\left(\int_{1/2}^yA_2(z)g(z)dz\right)dy = \infty.
\end{equation}
The expression under the outer integral can be rewritten as
$$
\exp\left(\int_{x_0}^{y}A_2(z)g(z)dz\right) \cdot \exp\left(\int_{1/2}^{x_0}A_2(z)g(z)dz\right).
$$
The second factor does not depend on $y$. Therefore, to show~\eqref{second}, it suffices to prove that
$$
\int_{x_0}^{1 - \de}\exp\left(\int_{x_0}^{y}A_2(z)g(z)dz\right)dy = \infty.
$$
Since $A_2(z) \ge a_2 - \eps$ and $g(z) \ge 0$ for $z \in [x_0, 1 - \de)$, we have:
$$
\int_{x_0}^{1 - \de}\exp\left(\int_{x_0}^{y}A_2(z)g(z)dz\right)dy \ge \int_{x_0}^{1 - \de}\exp\left((a_2 - \eps)\int_{1/2}^yg(z)dz\right)dy = +\infty.
$$
The proof of (i) is complete. Part (ii) is proved analogously. Let us show (iii). Assume 
$\int_{1/2}^{1 - \de}g(z)dz < \infty$. The function $A_2(\cdot)$ is bounded on $[1/2, 1 - \de]$. Therefore, for $y \in [1/2, 1 - \de)$ the function $F(\cdot)$ defined in the proof of Theorem~\ref{thm1} is bounded from above. Thus, the function $e^{F(\cdot)}$ is also bounded on $[1/2, 1 - \de)$, and therefore it is integrable on this interval.   $\blacksquare$

\section{Proof of Theorem~\ref{thm2} and Corollary~\ref{cor2}}

{\it Proof of Theorem~\ref{thm2}.} 
Let us first informally present the idea of the proof. 

{\bf Step 1:} Choose a market weight, say $\mu_1$, and find whether it hits $0$ or $1 - \de$. Write an equation for $\mu_1$ in the form:
\begin{equation}
\label{ito}
\d\mu_1(t) = \be(t)\d t + \rho(t)\d B(t).
\end{equation}
Here, $\be = (\be(t), t \ge 0)$ and $\rho = (\rho(t), t \ge 0)$ are certain random processes, and $B = (B(t), t \ge 0)$ is a one-dimensional standard Brownian motion. 

{\bf Step 2:}  The first difficulty is that $\be(t)$ and $\rho(t)$ are {\it not} functions of $\mu_1(t)$. As we shall see in the sequel, these are functions of the {\it whole} market weights vector $\mu(t) \equiv (\mu_1(t), \ldots, \mu_n(t))$. In the case of two stocks ($n = 2$), we could express the other market weight $\mu_2(t)$ as $1 - \mu_1(t)$, so $\be(t)$ and $\rho(t)$ were functions only of $\mu_1(t)$. Therefore, $\mu_1$ satisfied a certain stochastic differential equation, for which we could just apply Feller's test in the most straightforward manner. Here, we can only find some lower and upper estimates for $\be(t)$ and $\rho(t)$ of the form:
\begin{equation}
\label{estimates}
b_1(\mu_1(t)) \le \be(t) \le b_2(\mu_1(t)) \ \ \mbox{and}\ \ \si_1(\mu_1(t)) \le \rho(t) \le \si_2(\mu_1(t)).
\end{equation}

{\bf Step 3:} Having found these estimates, one might want to compare $\mu_1$ to solutions of SDEs.
However, this approach does not work directly. When we compare two It\^o processes, they must have the same diffusion coefficient, as in \cite[Proposition~5.2.18]{KS1991}. We resolve this difficulty by making the diffusion coefficients equal to each other using an appropriate time-change, as in \cite[Section 3]{H1985}. On the general theory of time-change, see the book \cite[Section 3.4.B]{KS1991}. Then we use Feller's test to find whether the solutions of these stochastic differential equations hit or do not hit $0$ and $1 - \de$. We shall prove that, under the conditions of this theorem, they do not hit $0$.

Now, let us carry out the proof of Theorem 2 in full detail, following the steps outlined above.

\medskip

{\bf Step 1:} Choose one of the market weights, say $\mu_1$. It satisfies the equation 
\begin{align*}
\d\mu_1(t) =& \biggl[\psi(\mu_1(t)) - \mu_1(t)\SL_{j=1}^n\psi(\mu_j(t))\biggr]\d t + \mu_1(t)\d W_1(t) - \mu_1(t)\SL_{j = 1}^n\mu_{j}(t)\d W_j(t)  \\ &= 
\biggl[\psi(\mu_1(t))(1 - \mu_1)  - \mu_1(t)\SL_{j=2}^n\psi(\mu_j(t))\biggr]\d t
\\ &+ \left(\mu_1(t) - \mu_1^2(t)\right)\d W_1(t) - \mu_1(t)\SL_{j=2}^n\mu_j(t)\, \d W_j(t),\ \ t \ge 0.
\end{align*}
Rewrite this equation as
$$
\d\mu_1(t) = \be(t)\d t + \rho(t)\d B(t).
$$
Here $B = (B(t), t \ge 0)$ is a one-dimensional standard Brownian motion, and
$$
\be(t) = \psi(\mu_1(t))(1 - \mu_1(t)) - \mu_1(t)\sum_{j =2}^n\psi(\mu_j(t))
$$
and
$$
\rho(t) = \left[(\mu_1(t) - \mu_1^2(t))^2 + \mu_1^2(t)\sum_{j=2}^n\mu^2_j(t)\right]^{1/2}
$$
are drift and diffusion coefficients. As mentioned above, they depend on the whole market weights vector $\mu(t)$, and not just on $\mu_1(t)$. 

\medskip

{\bf Step 2:} Let us find lower and upper estimates for $\be(t)$ and $\rho(t)$ which depend only on $\mu_1(t)$.  

(a) First, consider $\rho(t)$. We have: 
$$
\SL_{j = 2}^n\mu_j(t) = 1 - \mu_1(t),\ \ \mbox{and}\ \ \mu_j(t) > 0 \ \ \mbox{for}\ \ j = 2, \ldots, n.
$$
Fix a positive integer $m$ and a real number $a > 0$. Consider the expression
$$
y_1^2 + \ldots + y_m^2,\ \ \mbox{where}\ \ y_1, \ldots, y_m \ge  0\ \ \mbox{and} \ \ y_1 + \ldots + y_m = a.
$$
Its maximal value is achieved when one of the variables $y_i$ is equal to $a$ and all others are $0$, and this maximal value is equal to $a^2$. Its minimal value is achieved when all variables are equal to each other (i.e. equal to $a/m$), and this minimal value is equal to $m(a/m)^2 = a^2/m$. 
Therefore, 
$$
(n-1)^{-1}(1 - \mu_1(t))^2 \le \sum_{j = 2}^n\mu_j^2(t) \le (1 - \mu_1(t))^2.
$$
Plugging this into the formula for $\rho(t)$, we get the following estimates:
$$
\Bigl[\mu_1^2(1 - \mu_1(t))^2(1 + (n-1)^{-1})\Bigr]^{1/2} \le \rho(t) \le \Bigl[2\mu_1^2(t)(1 - \mu_1(t))^2\Bigr]^{1/2}.
$$
Rewrite this as
\begin{equation}
\label{sigma}
\vk\si(\mu_1(t)) \le \rho(t) \le \si(\mu_1(t)),
\end{equation}
where, as in the previous section, $\si(x) = \sqrt{2}x(1 - x)$, and 
$$
\vk := \left(\frac{1 + (n-1)^{-1}}2\right)^{1/2}
$$
is a constant, $0 < \vk \le 1$. 

\medskip

(b) Let us find similar estimates for the drift $\be(t)$. 
Condition~\eqref{boundedness}, imposed in Theorem~\ref{thm2}, implies that the function 
$\psi(x) := x(1/2 - g(x)) - x^2$ satisfies 
$$
0 < \varliminf\limits_{x \downarrow 0}\psi(x) \le \varliminf\limits_{x \downarrow 0}\psi(x) < \infty.
$$
Therefore, $\psi(\cdot)$ is bounded on $(0, 1/2]$. 
Also, from condition $\lim_{x \uparrow 1 - \de}g(x) = +\infty$ we imply that $\lim_{x \uparrow 1 - \de}\psi(x) = -\infty$, so $\psi(\cdot)$ is bounded from above on $(0, 1 - \de)$. Therefore, there exists a constant $C_1 > 0$ such that for every vector $(y_1, \ldots, y_{n-1})$ which satisfies the following properties: 
$$
0 < y_i < 1 - \de,\ \ i = 1, \ldots, n-1,\ \ y_1 + \ldots + y_{n-1} < 1,
$$
the following estimate holds true:
\begin{equation}
\label{upperC1}
\SL_{j=1}^{n-1}\psi(y_j) \le C_1.
\end{equation}
Therefore, we have: $\sum_{j=2}^{n}\psi(\mu_j(t)) \le C_1$. Thus, 
\begin{equation}
\label{lowerdrift}
\be(t) \ge b_1(\mu_1(t)),\ \ \mbox{where}\ \ b_1(x) := \psi(x)(1 - x) - C_1x.
\end{equation}
Let us also find an upper estimate. For any fixed $x \in (0, 1 - \de)$, let 
\begin{equation}
\label{theta}
\ta(x) := \inf\SL_{j=1}^{n-1}\psi(y_j),
\end{equation}
where the minimum is taken over all vectors $(y_1, \ldots, y_{n-1}) \in \BR^{n-1}$ such that 
$$
0 < y_i < 1 - \de,\ \ i = 1, \ldots, n - 1,\ \ y_1 + \ldots + y_{n-1} < 1 - x.
$$
This infinum is finite, since the function $\psi(\cdot)$ is bounded on $(0, 1 - x)$.  Let $b_2(x) := \psi(x)(1 - x) - \ta(x)x$. 
We have:  
\begin{equation}
\label{upperdrift}
\be(t) \le b_2(\mu_1(t)),\ \ \mbox{where}\ \ b_2(x) := \psi(x)(1 - x) - \ta(x)x.
\end{equation}
Let us mention some properties of the function $\ta(\cdot)$ and $b_2(\cdot)$. Since $\de < 1/2$, we have: $\de < 1 - \de$. When $x  \in [1/2, 1 - \de)$, we have: $1 - x \in (\de, 1/2]$. As mentioned before, the function $\psi(\cdot)$ is bounded on $(0, 1/2]$. It easily follows from the definition of $\ta(x)$ that this function is bounded for $x \in (1/2, 1]$. 
Recall the assumptions of Theorem~\ref{thm2}: $\varlimsup_{x \downarrow 0}xg(x) < 0$. Therefore, $\varliminf_{x \downarrow 0}\psi(x) > 0$, and we have: $\varliminf_{x \downarrow 0}b_1(x) > 0$. Hence, there exist $K_1 > 0$ and $x_1 \in (0, 1 - \de)$ such that for every $x \in (0, x_1]$, we have: $b_1(x) \ge K_1$.
Let $C_2$ be a constant such that $\ta(x) \ge C_2$ for $x \in (1/2, 1-\de]$. Then, for $x \in (1/2, 1 - \de)$ we have:   
$$
b_2(x) \le \ol{b}_2(x) := \psi(x)(1 - x) - C_2x.
$$
Moreover, $\lim_{x \uparrow 1 - \de}\psi(x) = -\infty$, and so $\lim_{x \uparrow 1 - \de}\ol{b}_2(x) = -\infty$. There exists $x_2 \in [1/2, 1 - \de)$ such that for $x \in [x_2, 1 - \de)$ we have:
$$
b_2(x) \le \ol{b}_2(x) < 0.
$$

\medskip

{\bf Step 3:}  Let us carry out the time-change mentioned above. Let 
$$
\Delta(t) = \int_0^t\frac{\rho^2(u)}{\si^2(\mu_1(u))}du,\ \ t \ge 0.
$$
From~\eqref{sigma} it follows that $\vk^2 \le \Delta'(t) \le 1$ for all $t \ge 0$. Therefore, $\Delta(\cdot)$ is a strictly increasing, continuously differentiable function with $\Delta(0) = 0$ and $\Delta(\infty) = \infty$. Let $\tau(\cdot)$ be its inverse function:
$$
\tau(s) = \inf\{t \ge 0\mid \Delta(t) \ge s\}.
$$
Let $Z(t) = \mu_1(\tau(t))$. By Lemma~\ref{Hajek} (see Appendix), 
$$
\d Z(s) = \be(\tau(s))\frac{\sigma^2(Z(s))}{\rho^2(\tau(s))}\d s + \sigma(Z(s))\d B_0(s),
$$
where $B_0 = (B_0(s), s \ge 0)$ is an $(\CF_{\Delta(t)})_{t \ge 0}$-Brownian motion.

Our goal is to compare the process $Z(\cdot) = (Z(t), t \ge 0)$ with solutions of one-dimensional stochastic differential equations to find whether this process hits or does not hit $0$ and $1 - \de$. The range of $\Delta(t)$ is $[0, \infty)$, so the process $Z(\cdot)$ hits a certain point if and only if the market weight $\mu_1(\cdot)$ hits this point. 
 
Recall the estimates from Step 2:
$$
b_1(\mu_1(t)) \le \be(t) \le b_2(\mu_1(t)) \ \ \mbox{and}\ \ \vk\si(\mu_1(t)) \le \rho(t) \le \si(\mu_1(t)).
$$
Therefore, 
\begin{equation}
\label{mainestimates}
b_1(Z(s)) \le \be(\tau(s)) \le b_2(Z(s)) \ \ \mbox{and}\ \ \vk\si(Z(s)) \le \rho(\tau(s)) \le \si(Z(s)).
\end{equation}
We have:
$$
\CB_1(Z(s)) \le \be(\tau(s))\frac{\sigma^2(Z(s))}{\rho^2(\tau(s))} \le \CB_2(Z(s)),
$$
where
$$
\CB_1(x) := \min(b_1(x), b_2(x), \vk^{-2}b_1(x), \vk^{-2}b_2(x)),\ \ 
\CB_2(x) := \max(b_1(x), b_2(x), \vk^{-2}b_1(x), \vk^{-2}b_2(x)).
$$
Recall that $0 < \vk \le 1$. Therefore, when $b_1(x) \ge 0$ (this is true, for example, for $x \in (0, x_1]$), we have: $\CB_1(x) = b_1(x)$ and $\CB_2(x) = \vk^{-2}b_2(x)$. When $b_2(x) \le 0$ (for example, for $x \in [x_2, 1 - \de)$), we have: $\CB_1(x) = \vk^{-2}b_1(x)$ and $\CB_2(x) = b_2(x)$. 

Let $Z_1(\cdot) = (Z_1(s), s \ge 0)$ and $Z_2(\cdot) = (Z_2(s), s \ge 0)$ be the solutions of the following stochastic differential equations:
$$
\d Z_1(s) = \CB_1(Z_1(s))\d s + \si^2(Z_1(s))\d B_0(s),\ \ \mbox{and}\ \ \d Z_2(s) = \CB_2(Z_2(s))\d s + \si^2(Z_2(s))\d B_0(s)
$$
with initial conditions $Z_1(0) = Z_2(0) = \mu_1(0)$. 
By Lemma~\ref{comparisonIW}, see Appendix, the process $Z(\cdot)$ a.s. does not hit $0$ if the process $Z_1(\cdot)$ a.s. does not hit $0$. Apply Feller's test (see Appendix): we want to show that
$$
\int_0^{x_1}\exp\biggl(-\int_{x_1}^y\frac{2\CB_1(z)}{\si^2(z)}dz\biggr)dy = \infty, 
$$
which can be equivalently written as
$$
\int_0^{x_1}\exp\biggl(\int_y^{x_1}\frac{2\CB_1(z)}{\si^2(z)}dz\biggr)dy = \infty.
$$
As mentioned above, for $x \in (0, x_1]$ we have: $b_1(x) \ge K_1 > 0$, and $\CB_1(x) = b_1(x)$. Therefore, we have:
\begin{align*}
\int_0^{x_1}&\exp\biggl(\int_y^{x_1}\frac{2\CB_1(z)}{\si^2(z)}dz\biggr)dy  \ge  \int_0^{x_1}\exp\biggl(K_1\int_y^{x_1}\frac{dz}{z^2(1 - z)^2}\biggr)dy \\ & \ge \int_0^{x_1}\exp\biggl(K_1\int_y^{x_1}\frac{dz}{z^2}\biggr)dy = \int_0^{x_1}\exp\biggl(K_1(1/y - 1/x_1)\biggr)dy \\ &= e^{-K_1/x_1}\int_0^{x_1}e^{K_1/y}dy  = \infty.
\end{align*}
This proves that $Z_1(\cdot)$ (as well as $Z(\cdot)$ and $\mu_1(\cdot)$) does not hit zero is complete.

\medskip

Now, let us deal with the other singularity: $1 - \de$. If the process $Z_1(\cdot)$ hits $1 - \de$ with positive probability, then the process $Z(\cdot)$  does the same. If the process $Z_2(\cdot)$ a.s. does not hit $1 - \de$, then the process $Z(\cdot)$ does the same.

The natural scale for the process $Z_1$ is
$$
\phi_1(x) := \int_{x_2}^x\exp\left(\int_{x_2}^y-\frac{2\CB_1(z)}{\si^2(z)}dz\right)dy.
$$
For $z \in [x_2, 1 - \de)$, we have: $\CB_1(z) = \vk^{-2}b_1(z)$, and
\begin{align*}
-\frac{2\CB_1(z)}{\si^2(z)} =& -\frac{2\vk^{-2}b_1(z)}{\si^2(z)} = -\frac{2(\psi(z)(1 - z) - C_1z)}{2\vk^2z^2(1 - z)^2} = -\frac{\psi(z)}{\vk^2z^2(1 - z)} + \frac{C_1}{\vk^2z(1 - z)^2}  \\ &= -\frac{z(-g(z) + 1/2) - z^2}{\vk^2z^2(1 - z)} + \frac{C_1}{\vk^2z(1 - z)^2} = A_1(z)g(z) + \eps_3(z),
\end{align*}
where we have defined
$$
\eps_3(z) := \frac{z - 1/2}{\vk^2z(1 - z)} + \frac{C_1}{\vk^2z(1 - z)^2}.
$$
Since the function $\eps_3$ is continuous on $[x_2, 1 - \de]$, it is bounded on this interval: there exists $C_3 > 0$ such that 
$|\eps_3(z)| \le C_3$ for $z \in [x_2, 1 - \de]$. 
Therefore,
$$
\phi_1(1 - \de) < \infty \ \ \mbox{if and only if}\ \ \int_{x_2}^{1 - \de}\exp\left(\int_{x_2}^yA_1(z)g(z)dz\right)dy < \infty.
$$
If these integrals are indeed finite, then 
$$
\int_{x_2}^{1 - \de}\frac{\phi_1(1 - \de) - \phi_1(x)}{\phi_1'(x)\si^2(x)}dx < \infty.
$$
This is verified in the same way as in the proof of Theorem~\ref{thm1}. Thus, if 
$$
\int_{x_2}^{1 - \de}\exp\left(\int_{x_2}^yA_1(z)g(z)dz\right)dy < \infty,
$$
then the process $Z_1(\cdot)$ hits $1 - \de$ with positive probability, and the market is not diverse. Moreover, we have: 
 $\phi_1(1 - \de) = \infty$ if and only if
$$
\int_{x_2}^{1 - \de}\exp\left(\int_{x_2}^yA_1(z)g(z)dz\right)dy = \infty.
$$
If $\phi_1(1 - \de) < \infty$, then the process $Z_1(\cdot)$ and therefore the processes $Z(\cdot)$ and $\mu_1(\cdot)$ hit $1 - \de$ with positive probability. The proof of part (i) is complete. 

Part (ii) is proved similarly. Indeed, in this case we have a scale
$$
\phi_2(x) := \int_{x_2}^x\exp\left(\int_{x_2}^y-\frac{2\CB_2(z)}{\si^2(z)}dz\right)dy.
$$
Let us show that $\phi_2(1 - \de) = \infty$. From here, it would follow that $Z_2(\cdot)$ a.s. does not hit $1 - \de$, and so $Z(\cdot)$ a.s. does not hit $1 - \de$, and the market is $\de$-diverse. Let
$$
\ol{\phi}_2(x) := \int_{x_2}^x\exp\left(\int_{x_2}^y-\frac{2\ol{b}_2(z)}{\si^2(z)}dz\right)dy.
$$
As mentioned above, for $z \in [x_2, 1 - \de)$, we have: $b_2(z) \le 0$, and so $\CB_2(z) = b_2(z) \le \ol{b}_2(z)$, and $\ol{\phi}(x) \le \phi(x)$ for $x \in [1/2, 1 - \de)$. Therefore, it suffices to show that $\ol{\phi}_2(1 - \de) = \infty$. The proof of this statement is similar to that from part (i). $\blacksquare$

The proof of Corollary~\ref{cor2} is similar to the proof of Corollary~\ref{cor1} and is left to the reader.

\section{Appendix}

Let us state Feller's test for explosions. It is taken from \cite[Section 6.2]{DurBook}. Fix an interval $(\al, \be) \subseteq \BR$ and a point $x_0 \in (\al, \be)$. Let $X(\cdot) = (X(t), t \ge 0)$ be a real-valued stochastic process satisfying the following stochastic differential equation:
$$
\d X(t) = b(X(t))\d t + \si(X(t))\d B(t),\ \ t \ge 0,\ \ X(0) = x_0.
$$
Here, $B = (B(t), t \ge 0)$ is a one-dimensional standard Brownian motion, and $b, \si : (\al, \be) \to \BR$ are continuous functions such that $\si^2(x) > 0$ for all $x \in (\al, \be)$. Define the {\it natural scale} as 
$$
\phi(x) := \int_{x_0}^x\exp\left[\int_{x_0}^y-\frac{2b(z)}{\si^2(z)}dz\right]dy,\ \ x \in (\al, \be).
$$
Also, let
$$
m(x) = \frac1{\phi'(x)\si^2(x)},\ \ x \in (\al, \be).
$$
Finally, define the following integrals:
$$
I_{\al} := \int_{\al}^{x_0}(\phi(x) - \phi(\al))m(x)\, dx,\ \ I_{\be} = \int_{x_0}^{\be}(\phi(\be) - \phi(x))m(x)\, dx.
$$

\begin{proposition} (i) If $\phi(\al) = -\infty$, or $\phi(\al) > -\infty$ and $I_{\al} = \infty$, then the process $X(\cdot)$ a.s. does not hit $\al$. Otherwise, it does hit $\al$ with positive probability.

(ii) If $\phi(\be) = \infty$, or $\phi(\be) < \infty$ but $I_{\be} = \infty$, then the process $X(\cdot)$ a.s. does not hit $\be$. Otherwise, it does hit $\be$ with positive probability.
\end{proposition}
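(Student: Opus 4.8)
The statement is the classical Feller test, and I would prove it by the standard route of reducing the general diffusion to its natural scale and then reading off the boundary behaviour from a martingale argument combined with a time change. Write $\mathcal L = \frac12\si^2(x)\frac{\d^2}{\d x^2} + b(x)\frac{\d}{\d x}$ for the generator. The natural scale $\phi$ is, by construction, the solution of $\mathcal L\phi = 0$ with $\phi(x_0)=0$ and $\phi'(x_0)=1$; indeed $\phi'(x) = \exp(-\int_{x_0}^x 2b(z)/\si^2(z)\,\d z) > 0$, so $\phi$ is a strictly increasing $C^2$ diffeomorphism of $(\al,\be)$ onto $(\phi(\al),\phi(\be))$. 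Setting $Y(t) := \phi(X(t))$ and applying It\^o's formula, the drift cancels because $\frac12\si^2\phi'' + b\phi' = 0$, leaving $\d Y(t) = \phi'(X(t))\si(X(t))\,\d B(t)$, a continuous local martingale. Since $\phi$ is a homeomorphism, $X$ hits $\be$ (resp.\ $\al$) in finite time if and only if $Y$ hits $\phi(\be)$ (resp.\ $\phi(\al)$), so it suffices to analyse the driftless process $Y$.

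Second, I would set up the two ingredients on a bounded subinterval $[a,b]$ with $\al < a < x_0 < b < \be$, writing $T_a,T_b$ for the first hitting times of $a,b$ and $\tau := T_a\wedge T_b$. Because $Y$ is a bounded martingale up to $\tau$, optional stopping yields the exit probability $\MP_{x_0}(T_b < T_a) = (\phi(x_0)-\phi(a))/(\phi(b)-\phi(a))$; this is the part governed by the scale function, telling us \emph{where} the process goes. To see \emph{how long} it takes, I would solve $\mathcal L u = -1$ on $(a,b)$ with $u(a)=u(b)=0$: the solution $u(x) = \ME_{x}[\tau]$ is given explicitly by a Green's function built from $\phi$ and the speed density $m(x) = 1/(\phi'(x)\si^2(x))$, and a Fubini rearrangement shows that the contribution of the right endpoint to $\ME_{x_0}[\tau]$ is precisely controlled by the integral $I_\be = \int_{x_0}^\be(\phi(\be)-\phi(x))m(x)\,\d x$ appearing in the statement (and symmetrically $I_\al$ for the left endpoint).

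Third, I would carry out the case analysis for $\be$ by letting $b\uparrow\be$ and $a\downarrow\al$. If $\phi(\be)=\infty$, the exit probability $(\phi(x_0)-\phi(a))/(\phi(b)-\phi(a))\to 0$ as $b\uparrow\be$ (the denominator blows up), so the chance of reaching arbitrarily high levels before descending to $a$ vanishes; letting also $a\downarrow\al$, one concludes that $X$ a.s.\ does not hit $\be$. If $\phi(\be)<\infty$, then $Y$ is an $L^1$-bounded martingale on the finite-length scale interval and converges a.s.; whether it actually reaches the level $\phi(\be)$ in finite time is decided by whether the clock $\langle Y\rangle$ explodes first, which is exactly what the finiteness of $\ME_{x_0}[\tau]$, i.e.\ of $I_\be$, measures. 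Thus $I_\be<\infty$ yields a finite expected hitting time and a positive probability of reaching $\be$, while $I_\be=\infty$ forces the hitting time to be infinite a.s. The symmetric argument at the left endpoint gives part (i); here one should note that $I_\be<\infty$ automatically entails $\phi(\be)<\infty$, so the two clauses in the statement are consistent.

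The main obstacle is the limiting argument of the third step: the exit identities hold only on bounded subintervals, and one must pass to the limits $a\downarrow\al$ and $b\uparrow\be$ while simultaneously controlling the scale function and the speed measure. In particular one has to rule out the process escaping to the opposite endpoint or oscillating without settling, and convert a statement about finiteness of an expected exit time into the almost-sure dichotomy ``hits $\be$ with positive probability'' versus ``a.s.\ does not hit $\be$.'' The cleanest way to handle this is to combine the scale-function convergence (which identifies the endpoint toward which the process is attracted) with the Green's-function formula for $\ME_{x_0}[\tau]$, using monotone convergence to identify the limiting integrals with $I_\al$ and $I_\be$.
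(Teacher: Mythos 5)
A preliminary remark: the paper does not prove this Proposition at all --- it is Feller's test, stated verbatim as a citation from Durrett [Section 6.2] --- so your attempt must be measured against the standard textbook argument rather than against anything in the paper. Your outline follows exactly that standard route: pass to the natural scale $Y=\phi(X)$, obtain exit probabilities on compact subintervals $[a,b]$ by optional stopping, represent $\ME_{x}[T_a\wedge T_b]$ by the Green's function built from $\phi$ and $m$, and let $b\uparrow\beta$, $a\downarrow\alpha$. Steps 1 and 2 are correct, and so is the case $\phi(\beta)=\infty$ (the exit probabilities vanish in the limit, and $\{T_\beta<\infty\}$ is the increasing union over $a\downarrow\alpha$ of $\{T_\beta<T_a\}$). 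The accessible case is also essentially right once stated carefully: if $\phi(\beta)<\infty$ and $I_\beta<\infty$, then $\ME_{x_0}[T_a\wedge T_b]$ is bounded uniformly in $b<\beta$, monotone convergence gives $T_a\wedge T_\beta<\infty$ a.s., and since $\MP(T_b<T_a)$ is bounded below by a positive constant uniformly in $b$, the process reaches $\beta$ with positive probability. (Your phrase ``finite expected hitting time'' should be resisted: $\ME[T_\beta]$ is in general infinite even in the accessible case, because the process may instead wander toward $\alpha$; what is finite is $\ME[T_a\wedge T_\beta]$.)

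The genuine gap is the inaccessible direction when $\phi(\beta)<\infty$ but $I_\beta=\infty$. There you argue that $I_\beta=\infty$ means the expected exit time is infinite, which ``forces the hitting time to be infinite a.s.'' This is a non sequitur: a random time with infinite expectation can perfectly well be finite with positive probability (or even almost surely), so the Green's-function computation $\ME[T_a\wedge T_b]\to\infty$ proves nothing about $\MP(T_\beta<\infty)$. This direction is the real content of Feller's test and needs a separate device. The standard one is the Lyapunov function $v(x):=\int_{x_0}^x(\phi(x)-\phi(y))m(y)\,dy$, which satisfies $\tfrac12\sigma^2v''+bv'=\tfrac12$, is nonnegative on $(\alpha,\beta)$, and has $v(\beta-)=\infty$ precisely when $\phi(\beta)=\infty$ or $I_\beta=\infty$. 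Then $e^{-t/2}\bigl(1+v(X_t)\bigr)$ is a nonnegative supermartingale (localize at exit times of compact subintervals), so its running supremum is a.s.\ finite by the maximal inequality; but on the event $\{T_\beta<\infty\}$ one would have $v(X_t)\to\infty$ while $e^{-t/2}\ge e^{-T_\beta/2}>0$ as $t\uparrow T_\beta$, making that supremum infinite. Hence $\MP(T_\beta<\infty)=0$. Alternatively, your ``clock'' heuristic can be made rigorous through the Dambis--Dubins--Schwarz time change together with a local-time (Ray--Knight or Jeulin-lemma) argument, but some such additional ingredient is indispensable; the exit-time expectations alone cannot deliver the almost-sure conclusion.
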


{\it Proof of Lemma~\ref{majorlemma}.} From~\eqref{main}, we have: 
$$
\d X_i(t) = X_i(t)\left[\left(-g(\mu_i(t)) + \frac12\right)\d t + \d W_i(t)\right],\ \ i = 1, \ldots, n 
$$
and
$$
\d S(t) = \SL_{i=1}^nX_i(t)\left[\left(-g(\mu_i(t)) + \frac12\right)\d t + \d W_i(t)\right].
$$
Therefore,
$$
d<X_i, S>_t = X_i^2(t)\d t,\ \ d<S>_t = \SL_{i=1}^nX_i^2(t)\, \d t.
$$
Apply Ito's formula to $X_i(t)/S(t)$. Let $f(x, y) = x/y$, then 
$$
f_x = \frac1y,\ f_y = -\frac{x}{y^2},\ f_{xx} = 0,\ f_{xy} = -\frac1{y^2},\ f_{yy} = \frac{2x}{y^3}.
$$
Therefore, we have:
\begin{align*}
\d\mu_i(t) &= \d f(X_i(t), S(t)) = f_x(X_i(t), S(t))\d X_i(t) + f_y(X_i(t), S(t))\d S(t)  \\ &+
\frac12f_{xx}(X_i(t), S(t))\d<X_i>_t + f_{xy}(X_i(t), S(t))\d<X_i, S>_t + \frac12f_{yy}(X_i(t), S(t))\d<S>_t \\ & = 
\frac1{S(t)}X_i(t)\left[\left(-g(\mu_i(t)) + \frac12\right)\d t + \d W_i(t)\right] - \frac{X_i(t)}{S^2(t)}\SL_{j=1}^nX_j(t)\left[\left(-g(\mu_j(t)) + \frac12\right)\d t + \d W_j(t)\right]  \\   & \hspace{5cm}  - 
\frac1{S^2(t)}X_i^2(t)\d t + \frac{X_i(t)}{S^3(t)}\SL_{j=1}^nX_j^2(t)\d t.
\end{align*}
We can express this in terms of market weights as
\begin{align*}
\mu_i(t)&\left(-g(\mu_i(t)) + \frac12\right)\d t + \mu_i(t)\d W_i(t) - \mu_i(t)\SL_{j=1}^n\mu_j(t)\left(-g(\mu_j(t)) + \frac12\right)\d t 
\\ & - \mu_i(t)\SL_{j=1}^n\mu_j(t)\d W_j(t)  - \mu_i^2(t)\d t + \mu_i(t)\SL_{j=1}^n\mu_j^2(t)\d t \\ &
= \biggl[\psi(\mu_i(t)) - \mu_i(t)\SL_{j=1}^n\psi(\mu_j(t))\biggr]\d t + \SL_{j=1}^n(\de_{ij}\mu_i - \mu_i\mu_j)\d W_j(t). \blacksquare
\end{align*}

\begin{lemma} Assume $X = (X_t, t \ge 0)$ is a progressively measurable continuous stochastic process such that 
$$
X_t = x + \int_0^t\ga_u\d u + \int_0^t\rho_u\d W_u,
$$
where $B$ is an $(\CF_t)_{t \ge 0}$-Brownian motion, and the processes $\ga = (\ga_t)_{t \ge 0}$ and $\rho = (\rho_t)_{t \ge 0}$ are progressively measurable. Assume that, for some constants $0 < \vk_1 \le \vk_2$, we have the following estimate:
$$
0 < \vk_1 \le \frac{|\rho_t|}{\si(X_t)} \le \vk_2 < \infty,\ \ t \ge 0,
$$
where $\si : \BR \to (0, \infty)$ is a continuous real-valued function. Then the following time-change process
$$
\Delta(t) := \int_0^t\frac{\rho^2_s}{\si^2(X_s)}ds.
$$
is a strictly increasing function, $\Delta(0) = 0,\ \Delta(\infty) = \infty$. Define its inverse: 
$$
\tau(s) := \inf\{t \ge 0 \mid \Delta(t) \ge s\}.
$$
The family $(\CF_{\tau(s)})_{s \ge 0}$ of $\sigma$-algebras is a filtration satisfying the usual conditions. Moreover, the process $Z = (Z_s = X_{\tau(s)})_{s \ge 0}$ satisfies the equation
$$
\d Z_s = \ga_{\tau(s)}\frac{\si^2(Z_s)}{\rho^2_{\tau(s)}}\d s + \si(Z_s)\d B_s,
$$
where $B = (B_s)_{s \ge 0}$ is another $(\CF_{\tau(s)})_{s \ge 0}$-Brownian motion. 
\label{Hajek}
\end{lemma}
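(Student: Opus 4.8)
The plan is to separate the verification into three pieces: the elementary regularity of the clock $\De(\cdot)$ and its inverse $\tau(\cdot)$, the filtration statement, and the identification of $Z$ as the solution of the stated equation, which is the only substantial part. First I would record the properties of $\De$ and $\tau$. By hypothesis the integrand in $\De(t) = \int_0^t \rho_s^2/\si^2(X_s)\,\d s$ satisfies $\vk_1^2 \le \rho_s^2/\si^2(X_s) \le \vk_2^2$ for every $s$, so $\De$ is absolutely continuous with $\De'(t) = \rho_t^2/\si^2(X_t) \in [\vk_1^2,\vk_2^2]$. Hence $\De$ is continuous and strictly increasing, $\De(0)=0$, and since $\De(t) \ge \vk_1^2 t$ we get $\De(\infty)=\infty$. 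Consequently $\tau=\De^{-1}$ is a continuous strictly increasing bijection of $[0,\infty)$ onto itself, continuously differentiable with $\tau'(s) = 1/\De'(\tau(s)) = \si^2(Z_s)/\rho_{\tau(s)}^2$, where $Z_s = X_{\tau(s)}$.

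Next I would dispatch the filtration claims, which are routine. For fixed $s$, continuity and strict monotonicity of $\De$ give the identity $\{\tau(s) \le t\} = \{\De(t) \ge s\}$; since $\De(t)$ is $\CF_t$-measurable (being a continuous adapted functional of the progressively measurable data), the right-hand set lies in $\CF_t$, so each $\tau(s)$ is an $(\CF_t)$-stopping time. As $s \mapsto \tau(s)$ is continuous and increasing, the family $(\CF_{\tau(s)})_{s \ge 0}$ is increasing and right-continuous, and it is complete because $(\CF_t)$ is; thus it satisfies the usual conditions.

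The main work is the identification of the equation, for which I would time-change the two terms of $X_{\tau(s)} = x + \int_0^{\tau(s)}\ga_u\,\d u + \int_0^{\tau(s)}\rho_u\,\d W_u$ separately. For the bounded-variation term, the ordinary change of variables $u=\tau(r)$ with $\tau'(r) = \si^2(Z_r)/\rho_{\tau(r)}^2$ yields $\int_0^{\tau(s)}\ga_u\,\d u = \int_0^s \ga_{\tau(r)}\,\si^2(Z_r)/\rho_{\tau(r)}^2\,\d r$, which is precisely the asserted drift. For the martingale term, set $M_t := \int_0^t \rho_u\,\d W_u$, a continuous $(\CF_t)$-local martingale with $\langle M\rangle_t = \int_0^t \rho_u^2\,\d u$. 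By the time-change theorem for continuous local martingales (see \cite[Section 3.4.B]{KS1991}), the process $\widetilde M_s := M_{\tau(s)}$ is a continuous $(\CF_{\tau(s)})$-local martingale with $\langle \widetilde M\rangle_s = \langle M\rangle_{\tau(s)} = \int_0^{\tau(s)}\rho_u^2\,\d u$, and the same substitution gives $\langle \widetilde M\rangle_s = \int_0^s \rho_{\tau(r)}^2\,\si^2(Z_r)/\rho_{\tau(r)}^2\,\d r = \int_0^s \si^2(Z_r)\,\d r$.

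Finally, using $\si^2(Z_r)>0$, I would define $B_s := \int_0^s \si(Z_r)^{-1}\,\d\widetilde M_r$ and compute $\langle B\rangle_s = \int_0^s \si^{-2}(Z_r)\,\d\langle\widetilde M\rangle_r = s$, so that by L\'evy's characterization $B$ is an $(\CF_{\tau(s)})$-Brownian motion and $\d\widetilde M_s = \si(Z_s)\,\d B_s$. Substituting the two transformed terms gives $Z_s = x + \int_0^s \ga_{\tau(r)}\,\si^2(Z_r)/\rho_{\tau(r)}^2\,\d r + \int_0^s \si(Z_r)\,\d B_r$, as claimed. The main obstacle is the stochastic-integral term: one must invoke carefully that $\widetilde M$ remains a local martingale with respect to the new filtration $(\CF_{\tau(s)})$ and that its quadratic variation transforms by the same substitution. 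This rests on the optional-sampling/time-change machinery rather than on any computation, and requires care precisely because $\De$ is not the intrinsic clock $\langle M\rangle$ but a smooth reparametrisation of it.
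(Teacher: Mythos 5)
Your proof is correct, and it reaches the conclusion by a genuinely different route for the one substantial step, the martingale term. The paper normalizes \emph{before} time-changing: it sets $M_t = \int_0^t \bigl(\rho_u/\si(X_u)\bigr)\,\d W_u$, so that $\langle M\rangle_t = \De(t)$ exactly; the Dambis--Dubins--Schwarz theorem \cite[Theorem 3.4.6]{KS1991} then applies verbatim with the clock $\De$, giving at once that $B_s := M_{\tau(s)}$ is an $(\CF_{\tau(s)})$-Brownian motion \emph{and} that the time-changed filtration satisfies the usual conditions, and \cite[Proposition 3.4.8]{KS1991} converts $\int_0^t \si(X_u)\,\d M_u$ into $\int_0^{\De(t)}\si(Z_v)\,\d B_v$. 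You time-change first and normalize afterwards: you compose the unnormalized local martingale $M_t = \int_0^t\rho_u\,\d W_u$ with $\tau$, compute $\langle M_{\tau(\cdot)}\rangle_s = \int_0^s \si^2(Z_r)\,\d r$, and extract the Brownian motion by L\'evy's characterization via $B_s = \int_0^s \si(Z_r)^{-1}\,\d M_{\tau(r)}$; by the time-change formula for stochastic integrals this is in fact the \emph{same} Brownian motion as the paper's, so the two arguments are dual to each other. The trade-off is in which theorem carries the delicate point. Your route rests on the general statement that a continuous local martingale composed with a continuous, a.s.\ finite family of stopping times is again a continuous local martingale for the time-changed filtration, with bracket $\langle M\rangle_{\tau(s)}$; this is true and standard (it is Proposition V.1.5 in Revuz--Yor), but it is \emph{not} what \cite[Section 3.4.B]{KS1991} literally provides, since there the time change is always the inverse of the quadratic variation itself, whereas your $\tau$ is the inverse of $\De \ne \langle M\rangle$ --- a mismatch you correctly flag in your closing remark, but for which you should cite the right source. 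The paper's normalization trick exists precisely to avoid this: it forces the clock to \emph{be} the quadratic variation, so the only hypothesis to check is $\langle M\rangle_\infty = \De(\infty) = \infty$. What your approach buys in exchange is self-containedness elsewhere: you verify explicitly that each $\tau(s)$ is a stopping time and that $(\CF_{\tau(s)})_{s\ge 0}$ satisfies the usual conditions, facts the paper simply inherits from the citation of DDS, and you dispense with the time-change-of-integrals proposition altogether.
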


\begin{proof} Analogous to \cite[Section 3]{H1985}. For $s \ge 0$, we have: 
\begin{equation}
\label{abc}
Z_s = X_{\tau(s)} = x + \int_0^{\tau(s)}\ga_u\d u + \int_0^{\tau(s)}\rho_u\d W_u.
\end{equation}
Let us change variables in the first integral from the right-hand side of~\eqref{abc}: $u = \tau(v),\ v = \De(u),\ 0 \le v \le s$. Then
$$
\d u = \tau'(v)\d v = \frac1{\De'(\tau(v))}\d v = 
\frac{\si^2(X_{\tau(v)})}{\rho^2_{\tau(v)}}\d v = 
\frac{\si^2(Z_v)}{\rho^2_{\tau(v)}}\d v.
$$
Therefore,
$$
\int_0^{\tau(s)}\ga_u\d u = \int_0^s\ga_{\tau(v)}\frac{\si^2(Z_v)}{\rho^2_{\tau(v)}}\d v.
$$
Now, consider the It\^o integral from the right-hand side of the~\eqref{abc}: let 
$$
M_t = \int_0^t\frac{\rho_u}{\si(X_u)}\d W_u,\ \ t \ge 0.
$$
Then $M = (M_t)_{t \ge 0}$ is a continuous $(\CF_t)_{t \ge 0}$-martingale with $<M>_t = \De(t)$, and
$$
\int_0^t\rho_u\d W_u = \int_0^t\si(X_u)\d M_u.
$$
By~\cite[Theorem 3.4.6]{KS1991}, $M_t = B_{\De(t)}$ for $t \ge 0$, where
$B = (B_s)_{s \ge 0}$ defined by $B_s = M_{\tau(s)},\ s \ge 0$ is another $(\CF_{\tau(s)})_{s \ge 0}$-Brownian motion, and this filtration satisfies the usual conditions. 
By \cite[Proposition 3.4.8]{KS1991}, 
$$
\int_0^t\si(X_u)\d M_u = \int_0^{\De(t)}\si(X_{\tau(v)})\d B_v = \int_0^{\De(t)}\si(Z_v)\d B_v.
$$
Since $\De(\tau(s)) = s$, we have:
$$
\int_0^{\tau(s)}\rho_u\d W_u = \int_0^{\tau(s)}\si(X_u)\d M_u = \int_0^{s}\si(Z_v)\d B_v.
$$
Thus, for $s \ge 0$ we have:
$$
Z_s = x + \int_0^s\ga_{\tau(v)}\frac{\si^2(Z_v)}{\rho^2_{\tau(v)}}\d v + 
\int_0^{s}\si(Z_v)\d B_v,
$$
which completes the proof.
\end{proof}

\begin{lemma} Assume $X = (X_t)_{t \ge 0}$ and $Y = (Y_t)_{t \ge 0}$ are two progressively measurable continuous stochastic processes which satisfy
$$
\d X_t = \be_t\d t + \si(X_t)\d W_t,\ X_0 = x;\ \ \d Y_t = b(Y_t)\d t + \si(Y_t)\d W_t,\ Y_0 = x.
$$
Here, $\be = (\be_t)_{t \ge 0}$ is a progressively measurable process, and $b,\ \si : \BR \to \BR$ are real-valued continuous functions. 
If $\be_t \le b(X_t)$ a.s. for $t \ge 0$, then $X_t \le Y_t$ a.s. for $t \ge 0$. 
If $\be_t \ge b(X_t)$ a.s. for $t \ge 0$, then $X_t \ge Y_t$ a.s. for $t \ge 0$. 
\label{comparisonIW}
\end{lemma}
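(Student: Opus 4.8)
The plan is to reduce the statement to the classical Yamada--Watanabe comparison principle for one-dimensional diffusions that share a common diffusion coefficient. The two assertions are symmetric: the second follows from the first applied to $-X$ and $-Y$ with the reflected coefficients $\si(-\cdot)$ and $-b(-\cdot)$, since $\be_t \ge b(X_t)$ transforms into the required one-sided bound for the reflected processes. So I would concentrate on proving that $\be_t \le b(X_t)$ forces $X_t \le Y_t$. The central object is the difference $D_t := X_t - Y_t$, which starts at $D_0 = 0$ and satisfies
$$
\d D_t = (\be_t - b(Y_t))\d t + (\si(X_t) - \si(Y_t))\d W_t.
$$
The goal is to show that $D_t^+ \equiv \max(D_t, 0)$ vanishes for all $t$ a.s.

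First I would introduce the Yamada--Watanabe smoothing of the positive-part function: a sequence $\phi_m \in C^2(\BR)$ with $\phi_m(x) \uparrow x^+$, $0 \le \phi_m' \le 1$, $\phi_m'$ supported in $(0, \infty)$, and $0 \le \phi_m''(x) \le 2/(m x)$ concentrated on a shrinking interval $(a_m, a_{m-1})$ with $a_m \downarrow 0$. Applying It\^o's formula to $\phi_m(D_t)$ and taking expectations after localizing by the exit times of $(X, Y)$ from a compact subinterval of the state space (which annihilates the martingale term), I would obtain
$$
\ME\,\phi_m(D_t) = \ME\IL_0^t\phi_m'(D_s)(\be_s - b(Y_s))\d s + \frac12\ME\IL_0^t\phi_m''(D_s)(\si(X_s) - \si(Y_s))^2\d s.
$$

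For the It\^o-correction term I would use the H\"older-$1/2$ regularity of $\si$, which holds in every application here since $\si(x) = \sqrt2\,x(1-x)$ is even Lipschitz on the relevant interval: the bound $(\si(X_s)-\si(Y_s))^2 \le K|D_s|$ together with $\phi_m''(D_s) \le 2/(m D_s)$ on its support makes the whole second term at most $Kt/m \to 0$. For the drift term, the support of $\phi_m'$ forces $D_s > 0$, i.e.\ $X_s > Y_s$; there the hypothesis $\be_s \le b(X_s)$ gives $\be_s - b(Y_s) \le b(X_s) - b(Y_s)$, and the local Lipschitz continuity of $b$ on the compact localizing interval yields $b(X_s) - b(Y_s) \le L D_s^+$. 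Since $0 \le \phi_m' \le 1$, the drift term is bounded by $L\IL_0^t\ME\,D_s^+\,\d s$. Letting $m \to \infty$ and using $\phi_m(D_t)\uparrow D_t^+$ gives
$$
\ME\,D_t^+ \le L\IL_0^t\ME\,D_s^+\,\d s,
$$
whence Gronwall's inequality forces $\ME\,D_t^+ = 0$, so $X_t \le Y_t$ a.s.\ for each $t$ and then for all $t$ simultaneously by path-continuity. Removing the localization by letting the compact interval exhaust the state space completes the argument.

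The main obstacle is the It\^o-correction term: controlling $\phi_m''(D_s)(\si(X_s)-\si(Y_s))^2$ is precisely what dictates the delicate construction of $\phi_m$ and requires the modulus of continuity of $\si$ to be no rougher than H\"older-$1/2$. By contrast, the fact that we are handed a one-sided bound $\be_t \le b(X_t)$ rather than an equality is harmless, because it is only ever invoked on $\{X_s > Y_s\}$, where it points in the favorable direction; this is exactly what lets the random drift $\be$, which is not a function of $X$, be replaced by the comparison function $b$.
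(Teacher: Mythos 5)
The paper does not really prove this lemma at all: its ``proof'' is a one-line citation to the comparison theorem in Ikeda--Watanabe, so your attempt has to be judged as a reconstruction of that theorem. Your scheme (Yamada--Watanabe smoothing of $x^+$, It\^o's formula, killing the second-order term via H\"older-$1/2$ continuity of $\si$, localization, Gronwall on $\ME D_t^+$, and the reflection trick for the second assertion) is the classical argument, essentially the proof of Proposition 5.2.18 in Karatzas--Shreve. But there is a genuine gap at the drift step: you invoke ``the local Lipschitz continuity of $b$ on the compact localizing interval.'' The lemma assumes only that $b$ is \emph{continuous}, and a continuous function need not be locally Lipschitz. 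This is not a removable technicality: with continuity alone the stated lemma is false. Take $\si \equiv 0$ (allowed, since $\si$ is merely required to be continuous), $b(x) = 3|x|^{2/3}$, $x = 0$, $Y_t \equiv 0$, $X_t = t^3$; then $\be_t = 3t^2 = b(X_t)$, so $\be_t \le b(X_t)$ holds, yet $X_t > Y_t$ for all $t > 0$. Some hypothesis beyond continuity is indispensable, and your proof silently inserts one.

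The trouble is that your repair does not match what the paper needs. In the application (Theorem 2), the comparison drifts are $\CB_1$ and $\CB_2$, built from $\psi(x) = x(1/2 - g(x)) - x^2$ with $g$ only continuous (admissible), and from the infimum function $\ta$; none of these is locally Lipschitz in general. By contrast, your H\"older-$1/2$ remark about $\si$ is harmless, because in the application $\si(x) = \sqrt2\,x(1-x)$ explicitly. The cited Ikeda--Watanabe result is tailored to exactly this situation: instead of Lipschitz drift, its hypotheses require pathwise uniqueness for the SDE with drift $b$ (which does hold here on compact subintervals of $(0, 1-\de)$, where $\si$ is nondegenerate, by one-dimensional results of Zvonkin/Nakao type), and its proof runs differently --- first a comparison under \emph{strictly} ordered drifts via a first-crossing argument that needs no regularity of $b$, then approximation of $b$ by $b + \eps$ and passage to the limit using uniqueness. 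If you want a self-contained proof that actually serves the paper, that is the route to take; as written, your argument proves a correct but strictly weaker lemma (Lipschitz $b$, H\"older-$1/2$ $\si$) that does not cover the paper's use of it.
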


\begin{proof} Follows from \cite[Lemma 6.1]{IWBook}.
\end{proof}

\begin{acknowledgements}

I would like to thank Professor Tomoyuki Ichiba for suggesting this problem. I would also like to thank my adviser, Professor Soumik Pal, for help and encouragement. I am thankful to an anonymous referee for reading this article very carefully, and making lots of useful comments. Last but not least, I am indebted to  Professor Ioannis Karatzas for useful discussion and valuable comments which, in particular, helped clarify the proof of Theorem~\ref{thm2}.  

\end{acknowledgements}

\bibliographystyle{plain}

\bibliography{aggregated}

\end{document}